\newcommand{\remove}[1]{}
\newcounter{Case}[theorem]
\newcounter{Subcase}[theorem]
\newcounter{Subsubcase}[theorem]
\newcounter{Subsubsubcase}[theorem]
\newaliascnt{observation}{theorem}
\begin{document}
\title{Improved upper bounds on color reversal by local inversions\thanks{Supported by SERB/ANRF MATRICS grant MTR/2022/000692: \textit{Algorithmic study on hereditary graph properties}}}

\author{Kumud Singh Porte \and
R B Sandeep \orcidID{0000-0003-4383-1819} \and
Kamal Santra\orcidID{0009-0006-5997-1452}}
\authorrunning{K. S. Porte et al.}

\institute{Department of Computer Science and Engineering, \\ Indian Institute of Technology Dharwad, \\ Dharwad, 580011, Karnataka, India \\
\email{\{cs24dp012,sandeep,ra.kamal.santra\}@iitdh.ac.in}}
\maketitle              

\begin{abstract}
We study the problem of color reversal in bicolored graphs under local inversions. A \emph{bicoloration} of a graph $G=(V,E)$ is a mapping $\beta: V \to \{-1,1\}$. A \emph{local inversion} at a vertex $v \in V$ consists of reversing the colors of all neighbors of $v$ and replacing the subgraph induced by these neighbors with its complement, while leaving $v$ and the rest of $G$ unchanged. Sabidussi (Discrete Mathematics, 1987) showed that any bicolored graph on $n$ vertices without isolated vertices can be color-reversed (that is, all vertex colors flipped while preserving the underlying graph) in at most $6n+3$ local inversions, and that any bicolored graph can be transformed into another bicolored graph on the same underlying graph in at most $9n$ local inversions. We improve both bounds: we prove that the first task can be accomplished in at most $4n-3$ local inversions, and the second in at most $ \left \lfloor \frac{11n-3}{2} \right \rfloor$ local inversions. Furthermore, we show that for stars and complete graphs, color reversal can be performed with at most $3n$ local inversions.
 \keywords{local inversion  \and local complementation \and bicolored graph \and color reversal}

\end{abstract}

\section{Introduction}

Local transformations of graphs form a common language across structural graph theory, algebraic graph theory, and quantum information. Among these, \emph{local complementation}---toggling adjacency within the open neighborhood of a chosen vertex—plays a central role. It appears in Bouchet’s theory of isotropic systems and the structure of circle graphs~\cite{DBLP:journals/jct/Bouchet88,DBLP:journals/jct/Bouchet94}, underpins the vertex–minor relation and rank-width~\cite{DBLP:journals/jct/Oum05}, and captures equivalences between quantum graph states under local Clifford operations~\cite{hein2004multiparty,van2004graphical}. Closely related, Seidel switching is well known for its applications to combinatorial and algebraic aspects of graphs~\cite{seidel1991survey}. Local complementation falls under the broad umbrella of graph modification problems, which have been extensively studied in algorithmic and parameterized complexity~\cite{DBLP:journals/dagstuhl-reports/BodlaenderHL14,DBLP:journals/ipl/Cai96,crespelle2023survey}.

We work with simple, undirected, labelled graphs $G=(V,E)$ whose vertices carry a binary color (encoded by $\beta:V\to\{-1,+1\}$). A single move, called \emph{local inversion}, at a vertex $v\in V$ applies local complementation at $v$ to the underlying graph and simultaneously flips the colors of all neighbors of $v$. Iterating moves along a string $w\in V^*$ transforms the bicolored graph $B=(G,\beta)$ into $B_w$, where $V^*$ denotes the set of all finite sequences of vertices from $V$. For $S\subseteq V$, we write $B^S$ for the bicolored graph that keeps the underlying graph $G$ and flips the colors of vertices in $S$ only; in particular, $B^{V}$ performs a \emph{global color reversal}. Our central quantity is the \emph{color reversal number} $cr(G)$: the minimum $\ell$ such that, for every initial coloring $\beta$, there exists a string $w$ with $|w|\le \ell$ and $B_w=B^{V}$. Note that if there exists a string $w$ such that $B_w = B^V$, then for every bicolored graph $B'$ on $G$, we also have $B'_w = (B')^V$. Therefore, it is independent of the initial coloring.

In 1987, Sabidussi~\cite{sabidussi1987color} introduced the color-reversal problem under local inversions. He proved that a bicolored graph without isolated vertices can be color-reversed using at most $6n+3$ local inversions. He also studied a more general problem: given two bicolored graphs $B$ and $B'$ with the same underlying graph $G$, can one transform $B$ into $B'$ using local inversions? He showed that, for any such pair, this can be done in at most $9n$ local inversions. Brijder and Hoogeboom~\cite{brijder2011group} applied these results in their study of the group structure of pivot and loop complementation on graphs and set systems.

\paragraph{Our contributions.}
We improve both bounds of Sabidussi~\cite{sabidussi1987color}.
\begin{itemize}
  \item For every connected $n$-vertex graph $G$ with $n \geq 2$,
  \[
    cr(G)\;\le\;
    \begin{cases}
      4n-4, & \text{if $n$ is even},\\
      4n-3, & \text{if $n$ is odd},
    \end{cases}
  \]
  obtained via parity-aware decompositions and repeated applications of \\constant-length strings (\autoref{thm:main}).
  \item For any two bicolored graphs $B=(G,\beta)$ and $B'=(G,\beta')$ on the same connected $n$-vertex graph $G$ with $n\geq 2$, there exists a string $w$ with
  $|w|\le \left \lfloor \tfrac{11n-3}{2} \right \rfloor$ such that $B_w = B'$ (\autoref{transform}).
  \item For star graphs and complete graphs $G$ with at least two vertices, we have $cr(G)\leq 3n$ (\autoref{star}, \autoref{complete}).
\end{itemize}

\autoref{thm:main} and \autoref{transform} crucially use the perfect forest theorem of Scott \\ ~\cite{scott2001induced}, which states that every graph of even order has a spanning forest in which each tree is an induced subgraph of the graph and every vertex has odd degree within its tree. See Caro et al.~\cite{caro2017two} for two shorter proofs of the theorem.  
All constructions that we provide are explicit and run in polynomial time.

The rest of the paper is organized as follows. \autoref{Section_2} introduces the preliminaries, providing definitions and notations for graphs, bicolored graphs, local complementation, and local inversion. It also states several known results and recalls the Perfect Forest Theorem. \autoref{Section_3} presents the proof of \autoref{thm:main}, which establishes an improved bound on $cr(G)$, and of \autoref{transform}, which bounds the number of moves required to transform one bicolored graph into another. \autoref{Section_4} focuses on special graph families, namely star graphs and complete graphs, and shows that stronger, family-specific bounds can be obtained through explicit constructions, leading to \autoref{star} and \autoref{complete}. Finally, \autoref{sec:conclusion} discusses a few open problems.

\section{Preliminaries}\label{Section_2}
We consider only simple, undirected, and labelled graphs. The vertex and edge sets of a graph $G$ are denoted by $V(G)$ and $E(G)$, respectively. Two graphs $G$ and $G'$ on the same vertex set are \emph{equivalent} if and only if $E(G)=E(G')$; equivalently, for all distinct $u,v\in V(G)$, $\{u,v\}\in E(G)$ if and only if $\{u,v\}\in E(G')$.
For a subset $S\subseteq V(G)$, we write $G[S]$ for the subgraph of $G$ induced by $S$. The \emph{open neighborhood} of a vertex $v$ in $G$ is denoted by $N_G(v)$.
A path, a complete graph, and a star on $t$ vertices are denoted by $P_t$, $K_t$, and $S_t$, respectively.

A \emph{bicoloration} of a graph $G=(V,E)$ is a mapping $\beta:V\to\{-1,1\}$.  
A \emph{bicolored graph} is a pair $B=(G,\beta)$, where $G$ is a graph and $\beta$ is a bicoloration of $G$.
Let $G=(V,E)$ be a graph and let $a\in V$. The \emph{local complement} of $G$ at $a$ is the graph $G_a=(V,E_a)$ obtained by toggling adjacency among the neighbors of $a$; formally, for distinct $x,y\in V$,
\[
\{x,y\}\in E_a \;\Longleftrightarrow\;
\begin{cases}
\{x,y\}\in E \text{ and } (x\notin N_G(a)\text{ or }y\notin N_G(a)),\\[4pt]
\text{or}\\[4pt]
\{x,y\}\notin E \text{ and } x,y\in N_G(a).
\end{cases}
\]
\autoref{fig:example4} illustrates the construction of $G_a$ from $G$.

\begin{figure}
\begin{center}
\begin{tikzpicture}
\node[draw=none, shape=circle, text width=2cm, name= p5,] {}; \node[draw=black, shape=circle, text width=2mm, name=e2] at(p5.90) {}; \node[draw=black, shape=circle, text width=2mm, name=e5] at(p5.20) {}; \node[draw=black, shape=circle, text width=2mm, name=e3] at(p5.160) {}; \node[draw=black, shape=circle, text width=2mm, name=e4] at(p5.200) {}; \node[draw=black, shape=circle, text width=2mm, name=e6] at(p5.340) {}; \node[, name=ei,left=5mm of e2] {};  \draw[-{Stealth[scale=1.5]}] (ei) to (e2); \draw[] (e2) to (e5); \draw[] (e3) to (e2); \draw[] (e3) to (e4); \draw[] (e3) to (e6); \draw[] (e5) to (e6); \draw[] (e4) to (e6); \draw[] (e2) to (e6); \draw[shift=(e2.center)] node[] {$a$}; \draw[shift=(e3.center)] node[] {$x$}; \draw[shift=(e4.center)] node[] {$x^{'}$}; \draw[shift=(e5.center)] node[] {$y$}; \draw[shift=(e6.center)] node[] {$y^{'}$}; \node[yshift=4mm] at(p5.270) {$G$}; \node[draw=none, shape=circle, text width=2cm, name= p6, right=2cm of p5] {}; \node[draw=black,shape=circle, text width=2mm, name=e2] at(p6.90) {}; \node[draw=black, shape=circle, text width=2mm, name=e5] at(p6.20) {}; \node[draw=black, shape=circle, text width=2mm, name=e3] at(p6.160) {}; \node[draw=black, shape=circle, text width=2mm, name=e4] at(p6.200) {}; \node[draw=black, shape=circle, text width=2mm, name=e6] at(p6.340) {}; \draw[] (e2) to (e5); \draw[] (e3) to (e2); \draw[] (e3) to (e4); \draw[] (e4) to (e6); \draw[] (e3) to (e5); \draw[] (e2) to (e6); \draw[shift=(e2.center)] node[] {$a$}; \draw[shift=(e3.center)] node[] {$x$}; \draw[shift=(e4.center)] node[] {$x^{'}$}; \draw[shift=(e5.center)] node[] {$y$}; \draw[shift=(e6.center)] node[] {$y^{'}$}; \node[yshift=4mm] at(p6.270) {$G_a$};
\end{tikzpicture}
\end{center}
\caption{Local complement of a graph $G$ with respect to vertex $a$.}
\label{fig:example4}
\end{figure}
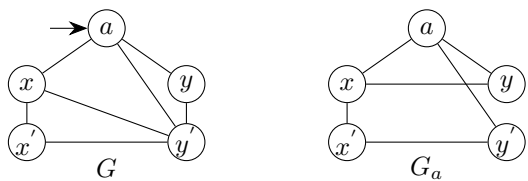

For a graph $G$, a \emph{string} on the alphabet $V(G)$ is a finite sequence of vertices of $V(G)$. The set $V(G)^*$ denotes the set of all finite sequences over $V(G)$. We use $\varepsilon$ to denote the \emph{empty string}.
Let $w\in V(G)^*$ be any string. We define the local complement of $G$ with respect to $w$, denoted $G_w$, inductively as follows:
\begin{itemize}
\item Base case: $G_\varepsilon=G$.
\item Inductive case: if $w=w'a$ with $a\in V(G)$, then $G_{w}= (G_{w'})_{a}$.
\end{itemize}

\autoref{fig:example5} shows an example of local complementation with respect to the string $w=abca$.

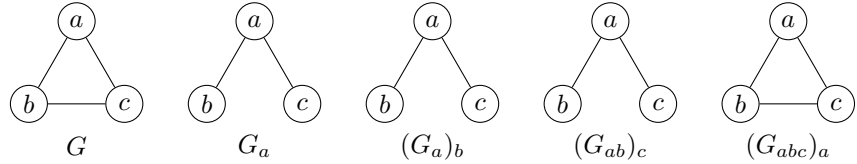
\begin{figure}
\begin{center}
\begin{tikzpicture}
\node[draw=none, shape=circle, text width=1.2cm, name= a1] {}; \node[draw=black, shape=circle, text width=2mm, name=a2] at(a1.90) {}; \node[draw=black, shape=circle, text width=2mm, name=a3] at(a1.210) {}; \node[draw=black, shape=circle, text width=2mm, name=a4] at(a1.330) {}; \draw[] (a2) to (a3); \draw[] (a4) to (a3); \draw[] (a2) to (a4); \draw[shift=(a2.center)] node[] {$a$}; \draw[shift=(a3.center)] node[] {$b$}; \draw[shift=(a4.center)] node[] {$c$}; 
\node[draw=none, shape=circle, text width=1.2cm, name= p2,right=9mm of a1] {}; \node[draw=black, shape=circle, text width=2mm, name=b2] at(p2.90) {}; \node[draw=black, shape=circle, text width=2mm, name=b3] at(p2.210) {}; \node[draw=black, shape=circle, text width=2mm, name=b4] at(p2.330) {}; \draw[] (b2) to (b3); \draw[] (b2) to (b4); \draw[shift=(b2.center)] node[] {$a$}; \draw[shift=(b3.center)] node[] {$b$}; \draw[shift=(b4.center)] node[] {$c$}; 
\node[draw=none, shape=circle, text width=1.2cm, name= p3,right=9mm of p2] {}; \node[draw=black, shape=circle, text width=2mm, name=c2] at(p3.90) {}; \node[draw=black, shape=circle, text width=2mm, name=c3] at(p3.210) {}; \node[draw=black, shape=circle, text width=2mm, name=c4] at(p3.330) {}; \draw[] (c2) to (c3); \draw[] (c2) to (c4); \draw[shift=(c2.center)] node[] {$a$}; \draw[shift=(c3.center)] node[] {$b$}; \draw[shift=(c4.center)] node[] {$c$}; 
\node[draw=none, shape=circle, text width=1.2cm, name= p4,right=9mm of p3] {}; \node[draw=black, shape=circle, text width=2mm, name=d2] at(p4.90) {}; \node[draw=black, shape=circle, text width=2mm, name=d3] at(p4.210) {}; \node[draw=black, shape=circle, text width=2mm, name=d4] at(p4.330) {}; \draw[] (d2) to (d3); \draw[] (d2) to (d4); \draw[shift=(d2.center)] node[] {$a$}; \draw[shift=(d3.center)] node[] {$b$}; \draw[shift=(d4.center)] node[] {$c$}; 
\node[draw=none, shape=circle, text width=1.2cm, name= p5,right=9mm of p4] {}; \node[draw=black, shape=circle, text width=2mm, name=e2] at(p5.90) {}; \node[draw=black, shape=circle, text width=2mm, name=e3] at(p5.210) {}; \node[draw=black, shape=circle, text width=2mm, name=e4] at(p5.330) {}; \draw[] (e2) to (e3); \draw[] (e2) to (e4); \draw[] (e3) to (e4); \draw[shift=(e2.center)] node[] {$a$}; \draw[shift=(e3.center)] node[] {$b$}; \draw[shift=(e4.center)] node[] {$c$}; \node[yshift=-2mm] at(a1.270) {$G$}; \node[yshift=-2mm] at(p2.270) {$G_a$}; \node[yshift=-2mm] at(p3.270) {$(G_a)_b$}; \node[yshift=-2mm] at(p4.270) {$(G_{ab})_c$}; \node[yshift=-2mm] at(p5.270) {$(G_{abc})_a$};
\end{tikzpicture}
\end{center}
\caption{Local complement of a graph with respect to the string $abca$.}
\label{fig:example5}
\end{figure}

Given a bicoloration $\beta$, the bicoloration with respect to a vertex $a$, denoted $\beta_a$, is defined by inverting the color of each neighbor of $a$:
\[
 \beta_{a}(x)=
\begin{cases}
-\beta(x),  & \text{if } x\in N_G(a),\\[4pt]
\beta(x),   & \text{otherwise}.
\end{cases}
\]

Let $B=(G,\beta)$ be a bicolored graph and let $a\in V(G)$. Writing $G_a$ and $\beta_a$ as above, the \emph{local inversion} of $B$ at $a$, denoted $B_a$, is $(G_a,\beta_a)$.
Let $B=(G,\beta)$ be a bicolored graph and let $w\in V(G)^*$ be a string. The local inversion of $B$ with respect to $w$ is defined inductively as follows:
\begin{itemize}
    \item Base case: $B_\varepsilon = B$.
    \item Inductive case: if $w=w'a$ with $a\in V(G)$, then $B_{w} = (B_{w'})_{a}$.
\end{itemize}
For example, see \autoref{fig:example8}.

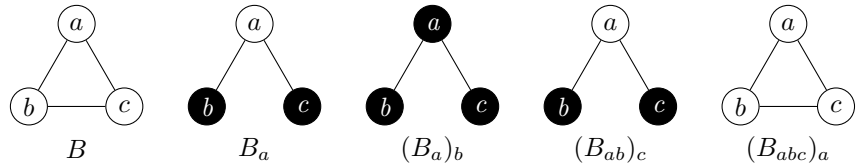
\begin{figure}
\begin{center}
\begin{tikzpicture}
\node[draw=none, shape=circle, text width=1.2cm, name= g1] {}; \node[draw=black, shape=circle, text width=2mm, name=a2] at(g1.90) {}; \node[draw=black, shape=circle, text width=2mm, name=a3] at(g1.210) {}; \node[draw=black, shape=circle, text width=2mm, name=a4] at(g1.330) {}; \draw[] (a2) to (a3); \draw[] (a4) to (a3); \draw[] (a2) to (a4); \draw[shift=(a2.center)] node[] {$a$}; \draw[shift=(a3.center)] node[] {$b$}; \draw[shift=(a4.center)] node[] {$c$}; 
\node[draw=none, shape=circle, text width=1.2cm, name= g2,right=9mm of g1] {}; \node[draw=black, shape=circle, text width=2mm, name=h2] at(g2.90) {}; \node[draw=black, fill=black, shape=circle, text width=2mm, name=h3] at(g2.210) {}; \node[draw=black, fill = black, shape=circle, text width=2mm, name=h4] at(g2.330) {}; \draw[] (h2) to (h3); \draw[] (h2) to (h4); \draw[shift=(h2.center)] node[] {$a$}; \draw[shift=(h3.center)] node[text=white] {$\mathbf{\textit{b}}$}; \draw[shift=(h4.center)] node[text=white] {$\mathbf{\textit{c}}$}; 
\node[draw=none, shape=circle, text width=1.2cm, name= g3,right=9mm of g2] {}; \node[draw=black, fill = black,shape=circle, text width=2mm, name=i2] at(g3.90) {}; \node[draw=black,fill = black, shape=circle, text width=2mm, name=i3] at(g3.210) {}; \node[draw=black, fill = black,shape=circle, text width=2mm, name=i4] at(g3.330) {}; \draw[] (i2) to (i3); \draw[] (i2) to (i4); \draw[shift=(i2.center)] node[text=white] {$\mathbf{\textit{a}}$}; \draw[shift=(i3.center)] node[text=white] {$\mathbf{\textit{b}}$}; \draw[shift=(i4.center)] node[text=white] {$\mathbf{\textit{c}}$}; 
\node[draw=none, shape=circle, text width=1.2cm, name= g4,right=9mm of g3] {}; \node[draw=black, fill = white,shape=circle, text width=2mm, name=j2] at(g4.90) {}; \node[draw=black, fill=black, shape=circle, text width=2mm, name=j3] at(g4.210) {}; \node[draw=black, fill=black, shape=circle, text width=2mm, name=j4] at(g4.330) {}; \draw[] (j2) to (j3); \draw[] (j2) to (j4); \draw[shift=(j2.center)] node[] {$a$}; \draw[shift=(j3.center)] node[text=white] {$\mathbf{\textit{b}}$}; \draw[shift=(j4.center)] node[text=white] {$\mathbf{\textit{c}}$}; 
\node[draw=none, shape=circle, text width=1.2cm, name= g5,right=9mm of g4] {}; \node[draw=black, shape=circle, text width=2mm, name=k2] at(g5.90) {}; \node[draw=black, shape=circle, text width=2mm, name=k3] at(g5.210) {}; \node[draw=black, shape=circle, text width=2mm, name=k4] at(g5.330) {}; \draw[] (k2) to (k3); \draw[] (k2) to (k4); \draw[] (k3) to (k4); \draw[shift=(k2.center)] node[] {$a$}; \draw[shift=(k3.center)] node[] {$b$}; \draw[shift=(k4.center)] node[] {$c$}; \node[yshift=-2mm] at(g1.270) {$B$}; \node[yshift=-2mm] at(g2.270) {$B_a$}; \node[yshift=-2mm] at(g3.270) {$(B_a)_b$}; \node[yshift=-2mm] at(g4.270) {$(B_{ab})_c$}; \node[yshift=-2mm] at(g5.270) {$(B_{abc})_a$};
\end{tikzpicture}
\end{center}
\caption{Local inversion of a graph with respect to the string $abca$.}
\label{fig:example8}
\end{figure}

We now state some known results that will be used in the following sections. \autoref{pro:1} means that it does not make sense to repeatedly apply local inversion on a vertex.

\begin{proposition}[\cite{sabidussi1987color}]\label{pro:1}
For any bicolored graph $B = (G, \beta)$, $B_{aa}=B$, where $a$ is any vertex in $G$.
\end{proposition}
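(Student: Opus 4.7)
The plan is to prove the proposition by unpacking the definition of local inversion in two stages: first verifying the claim at the level of the underlying graph, then at the level of the bicoloration. The key observation driving the whole argument is that local complementation at $a$ only toggles edges strictly among the neighbors of $a$, so it leaves the open neighborhood of $a$ unchanged; that is, $N_{G_a}(a) = N_G(a)$.

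First I would establish the graph-level statement $(G_a)_a = G$. By the definition given in the paper, for distinct $x,y \in V(G)$, an edge $\{x,y\}$ is toggled by local complementation at $a$ iff $x,y \in N_G(a)$. Since $N_{G_a}(a) = N_G(a)$ (edges incident to $a$ itself are never altered), the second local complementation at $a$ toggles exactly the same set of pairs as the first. Toggling twice is the identity on each such pair, so $(G_a)_a = G$.

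Next I would handle the coloring, showing $(\beta_a)_a = \beta$. By definition, $\beta_a$ flips the color of every $x \in N_G(a)$ and leaves the others fixed. Applying the rule again, $(\beta_a)_a$ flips the color of every $x \in N_{G_a}(a)$, which by the observation above equals $N_G(a)$. Hence the same vertices get their colors flipped a second time, restoring $\beta$ on $N_G(a)$ and leaving everything else untouched. Combining the two halves gives $B_{aa} = (G_a, \beta_a)_a = ((G_a)_a, (\beta_a)_a) = (G, \beta) = B$.

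The only conceptual point that needs care is the invariance $N_{G_a}(a) = N_G(a)$; without it, the two successive flips of colors and edge-toggles would not be acting on identical vertex sets and the argument would collapse. I expect this to be immediate from the definition, so there is no real obstacle, and the whole proof is a two-line consequence of reading off the local-complementation rule.
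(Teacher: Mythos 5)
Your proof is correct. The paper states this proposition without proof, citing Sabidussi, so there is no in-paper argument to compare against; your direct verification is the standard one, and you correctly isolate the one point that needs checking, namely that $N_{G_a}(a) = N_G(a)$ (which follows because the complementation rule only toggles pairs $\{x,y\}$ with both $x,y \in N_G(a)$, and $a$ itself lies in no such pair), after which both the edge-toggles and the color-flips of the second inversion act on exactly the same sets as the first and cancel.
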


Let $B=(G,\beta)$ be a bicolored graph with $\beta:V(G)\to\{-1,1\}$, and let $A\subseteq V(G)$. 
Define the operator $h_A$ by
\[
\bigl(h_A(\beta)\bigr)(v)=
\begin{cases}
-\beta(v), & \text{if } v\in A,\\[2pt]
\beta(v),  & \text{if } v\notin A.
\end{cases}
\]
We write $B^{A}=(G, h_A(\beta))$ for the bicolored graph obtained from $B$ by flipping the colors of all vertices in $A$ and leaving all others unchanged. 
We write 
$B^{a}$ for $B^{\{a\}}$.

\autoref{edge_reversal_lemma} states that in a bicolored graph, the endpoints of an edge can be color-reversed using $6$ local inversions, while the underlying graph remains unchanged.
 
\begin{proposition}[\cite{sabidussi1987color}]\label{edge_reversal_lemma}
Let $B=(G,\beta)$ be a bicolored graph. If $ab\in E(G)$ and $w=ababab$ (of length~$6$), then $B_w=B^{\{a,b\}}$.
\end{proposition}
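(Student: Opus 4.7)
The plan is to partition $V(G)\setminus\{a,b\}$ according to $G$-adjacency to $a$ and $b$: let $A_0=N_G(a)\setminus(N_G(b)\cup\{b\})$ (private neighbors of $a$), $B_0=N_G(b)\setminus(N_G(a)\cup\{a\})$ (private neighbors of $b$), $C_0=N_G(a)\cap N_G(b)$ (common neighbors), and let $D_0$ collect the remaining vertices. I would then track how the active neighborhoods of $a$ and $b$ evolve across the six operations, and count, for each vertex, how many of these neighborhoods contain it.

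A first observation is that every vertex in $D_0$ stays non-adjacent to both $a$ and $b$ throughout the process, since local inversion at $a$ (resp.\ $b$) only toggles edges inside $N(a)$ (resp.\ $N(b)$); no edge incident to a $D_0$-vertex is ever touched and no $D_0$-vertex has its color flipped. For the remaining classes, I would compute $N_{G^{(i-1)}}(u_i)\cap(A_0\cup B_0\cup C_0)$ for $i=1,\dots,6$, where $G^{(i)}$ is the underlying graph after $i$ steps and $u_i$ alternates $a,b,a,b,a,b$; a short induction shows that this intersection cycles as
\[
A_0\cup C_0,\quad A_0\cup B_0,\quad B_0\cup C_0,\quad A_0\cup C_0,\quad A_0\cup B_0,\quad B_0\cup C_0,
\]
and in particular each of $A_0,B_0,C_0$ appears in exactly four of these six sets.

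With this table in hand, the equality $G_{ababab}=G$ reduces to checking that every pair of classes among $\{A_0,B_0,C_0\}$, together with the edges incident to $a$ or $b$, is toggled an even number of times over the six steps; this is immediate from the cyclic pattern. For the coloration, each vertex of $A_0\cup B_0\cup C_0$ lies in four of the six active neighborhoods and hence has its color flipped an even number of times. Since the edge $ab$ is preserved at every step (local inversion at $a$ or $b$ cannot destroy the edge $ab$), $a$ belongs to the active neighborhood at exactly the three even-indexed steps and $b$ at exactly the three odd-indexed steps, so each of $a,b$ is flipped three times. Combining these counts yields $B_w=B^{\{a,b\}}$.

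The main obstacle is bookkeeping rather than insight: the cyclic structure of the neighborhoods has to be verified step by step. A slicker alternative would be to invoke that $G_{aba}$ coincides with the classical pivot $G\wedge ab$ and that pivoting is an involution, giving $G_{ababab}=(G\wedge ab)\wedge ab=G$ at once; only the parity-of-flips count would then remain to be carried out.
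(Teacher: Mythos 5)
Your argument is correct, but note that the paper does not prove this statement at all: it is imported verbatim from Sabidussi's 1987 paper as a cited proposition, so there is no in-paper proof to compare against. Your self-contained verification checks out. The cycle you claim for the active neighborhoods, namely $A_0\cup C_0,\ A_0\cup B_0,\ B_0\cup C_0,\ A_0\cup C_0,\ A_0\cup B_0,\ B_0\cup C_0$ (restricted to $V\setminus\{a,b\}$), is exactly right, and the one point that makes the class-level bookkeeping legitimate --- that at every step the active neighborhood meets $V(G)\setminus\{a,b\}$ in a \emph{union of whole original classes}, so that each class is flipped or toggled en bloc --- is precisely what your induction establishes. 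From the cycle, each of $A_0,B_0,C_0$ is flipped four times, each pair of distinct classes co-occurs twice, each class co-occurs with $a$ (resp.\ $b$) twice, and $a,b$ are each flipped three times since the edge $ab$ survives every step; together with the observation that $D_0$ is never touched, this gives $G_{ababab}=G$ and $B_w=B^{\{a,b\}}$. Your closing remark is also a clean shortcut for the graph part: $G_{aba}$ is the pivot $G\wedge ab$ and pivoting is an involution, so $G_{ababab}=(G\wedge ab)\wedge ab=G$, leaving only the parity count for the colors.
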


\autoref{triangle_reversal_lemma} states that one vertex of a triangle in a bicolored graph can be color-reversed in $7$ moves. 
\begin{proposition}[\cite{sabidussi1987color}]\label{triangle_reversal_lemma}
Let $B=(G,\beta)$ be a bicolored graph. If $a,b,c\in V(G)$ form a triangle and $w=abacbac$ (of length~$7$), then $B_w=B^{a}$.
\end{proposition}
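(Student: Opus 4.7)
To verify $B_w = B^{a}$ for $w=abacbac$, I would run a direct step-by-step analysis, organized so that the bookkeeping stays tractable. First, partition $V(G)\setminus\{a,b,c\}$ into at most eight classes $V_S$ indexed by $S\subseteq\{a,b,c\}$, where $V_S=\{v : N_G(v)\cap\{a,b,c\}=S\}$. A short induction on the step index shows that members of the same class remain indistinguishable throughout the seven inversions, both with respect to their adjacencies to $\{a,b,c\}$ and with respect to their color-flip parity. Thus the state I need to carry is finite: the current edges inside $\{a,b,c\}$, one adjacency vector per class $V_S$ to $\{a,b,c\}$, the color-flip parities of $a,b,c$ and of each $V_S$, and the parity of pairwise edge-toggles between (and inside) classes.

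Then I would apply each letter of $w=abacbac$ in order, using the definition of local inversion: at each step, toggle every adjacency among the current neighbors of the inverted vertex and flip their colors. Restricted to just the triangle, a direct trace gives the skeleton of the argument: the edges $ab,ac,bc$ are each toggled an even number of times, the colors of $b$ and $c$ are flipped an even number of times, and the color of $a$ is flipped an odd number of times. The extension to the classes is mechanical: an $8\times 7$ table tracks how each class's adjacency vector to $\{a,b,c\}$ evolves, and its color-flip parity accumulates according to whether the class currently lies in the neighborhood of the inverted vertex.

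Closing the argument then amounts to checking three invariants after step 7: every class's adjacency vector returns to its initial value; every pair of vertices outside $\{a,b,c\}$ has its edge toggled an even number of times (so the induced subgraph on $V(G)\setminus\{a,b,c\}$ is unchanged); and among the colors, only $\beta(a)$ is negated. The main obstacle will be the pairwise adjacencies outside the triangle: an edge $vw$ is toggled at step $t$ precisely when both endpoints are current neighbors of the step-$t$ invertee, so one must read off the evolving adjacency vectors of both classes at each step. Once the $8\times 7$ table is filled in, all remaining checks reduce to simple parity computations, from which $B_w=B^{a}$ follows.
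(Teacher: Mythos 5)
The paper offers no proof of this statement at all---it is quoted verbatim from Sabidussi's 1987 paper as \autoref{triangle_reversal_lemma}---so there is nothing internal to compare against; your finite-state verification is a perfectly legitimate way to establish it from scratch. Your method is sound on every point that matters: classifying the outside vertices into the classes $V_S$ is justified by exactly the induction you sketch (an outside vertex's adjacency to $\{a,b,c\}$ and its colour can only change at a step where the invertee is a triangle vertex, so the evolution depends only on the class and on the current triangle-internal edges), and the observation that an outside edge $vw$ toggles at step $t$ iff both endpoints are currently adjacent to the invertee correctly reduces the whole graph-preservation claim to a class-pair parity check. Your triangle-only trace is also correct: $bc$ toggles at steps $1,3$, $ab$ at steps $4,7$, $ac$ never, and the flip counts are $3$ for $a$ and $4$ for each of $b,c$.

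The gap is that, as written, you never fill in the table, and the table \emph{is} the proof---everything before it is bookkeeping that reduces the claim to that computation. For the record, the computation does close: every class returns to its initial adjacency vector after step $7$; each nonempty class is adjacent to the invertee at exactly $4$ of the $7$ steps (e.g.\ the class initially adjacent only to $a$ is active at steps $1,2,4,6$; the class adjacent only to $c$ at steps $4,5,6,7$; the class adjacent to all three at steps $1,3,4,7$), so every outside colour flips an even number of times; and the seven ``active-step'' sets pairwise intersect in exactly $2$ steps, so every outside edge (within or between classes) toggles an even number of times. You should either carry out this table explicitly or at least state these three concrete outcomes as verified facts; asserting that ``the remaining checks reduce to simple parity computations'' without exhibiting them leaves the claim unproved, since there is no a priori reason the parities had to come out even---that they do is precisely the content of the proposition.
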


\autoref{k2k3p3} handles the base cases when $G$ has $2$ or $3$ vertices.

\begin{proposition}[\cite{sabidussi1987color}]\label{k2k3p3}
A bicolored graph on $K_2$ with vertices $a,b$ can be color-reversed using $2$ local inversions, for example, with the string $ab$.
A bicolored graph on a triangle $abc$ can be color-reversed using $9$ local inversions, for example, with the string $abababcac$.
Similarly, a bicolored graph on a path $P_3$ with vertices $a,b,c$, where $b$ is the degree-$2$ vertex, can be color-reversed using $9$ local inversions, for example, with the string $ababacacb$.
\end{proposition}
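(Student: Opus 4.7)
The plan is a direct step-by-step verification: for each of the three graphs and the corresponding string, I would apply the local inversions one vertex at a time and track how the underlying graph and the coloring evolve in parallel.

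\textbf{The $K_2$ case.} This is immediate. Applying $a$ flips the color of the unique neighbor $b$, and local complementation inside $N(a)=\{b\}$ leaves the graph unchanged since $N(a)$ is a single vertex. Applying $b$ next flips the color of $a$ and similarly does not alter the graph. Both colors have been flipped and the graph is still $K_2$.

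\textbf{The $K_3$ case.} Since $ab\in E(K_3)$, \autoref{edge_reversal_lemma} gives that the prefix $ababab$ transforms $B$ into $B^{\{a,b\}}$, with the underlying graph still $K_3$ and only the colors of $a,b$ flipped. I would then trace the three-symbol suffix $cac$ by hand starting from $B^{\{a,b\}}$: applying $c$ toggles the edge $ab$ (removing it, so the graph becomes a $P_3$ centered at $c$) and flips the colors of $a,b$ back to their original values; applying $a$ in this new $P_3$ has $N(a)=\{c\}$, so it only flips the color of $c$; finally, applying $c$ toggles $ab$ once more (restoring $K_3$) and flips the colors of $a$ and $b$. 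The net effect on the original $B$ is that all three colors are flipped and the underlying graph is preserved.

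\textbf{The $P_3$ case.} I would simply run the nine-symbol string $ababacacb$ on $B$ from scratch, recording the underlying graph and the coloring at each step. The recurring structural observations that drive the bookkeeping are: whenever the current graph is a $P_3$, applying its degree-$2$ vertex inserts the missing edge and yields $K_3$; whenever the current graph is $K_3$, applying any vertex $v$ removes the edge opposite to $v$ and yields a $P_3$ centered at $v$. Under these rules, one checks that the intermediate graphs alternate between $P_3$ and $K_3$, and that the ninth step restores a $P_3$ centered at $b$ with every color flipped.

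The main obstacle is purely bookkeeping rather than conceptual: because a single local inversion can simultaneously change both the graph and the coloring, one must recompute the neighborhood of the next vertex with respect to the \emph{current} graph at each step. No deeper structural argument is required; the proposition is established by explicitly exhibiting the nine (or two) intermediate bicolored graphs and confirming that the last one equals $B^{V(G)}$.
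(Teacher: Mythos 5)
Your verification is correct, and in fact the paper offers no proof of this proposition at all: it is imported verbatim from Sabidussi's 1987 paper as \autoref{k2k3p3}, so a direct step-by-step trace like yours is exactly what is needed to establish it from scratch. I checked your computations: the $K_2$ and $K_3$ traces are right as written, and running $ababacacb$ on the $P_3$ with edges $ab,bc$ gives the flip-parity sequence $(0,1,0)\to(1,1,1)\to(1,0,0)\to(0,0,0)\to(0,1,1)\to(1,0,1)\to(1,0,0)\to(0,1,0)\to(1,1,1)$ with the final graph again the $P_3$ centered at $b$, as claimed. One small inaccuracy in your description of the $P_3$ case: the intermediate graphs do not strictly alternate between $P_3$ and $K_3$, because applying a degree-$1$ vertex of a $P_3$ (as happens at steps $4$ and $7$) leaves the graph unchanged; your two stated ``recurring structural observations'' should be supplemented by this third rule, which you did in fact use implicitly in your $K_3$ trace. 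This does not affect the validity of the argument.
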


An \textit{odd tree} is a tree in which every vertex has odd degree.  
A \textit{perfect forest} of a graph $G$ is a spanning forest of $G$ in which each tree is an induced subgraph of $G$ and is an odd tree.

\begin{proposition}[\textbf{Perfect Forest Theorem}~\cite{caro2017two,scott2001induced}]\label{perfect_forest_theo}
Let $G$ be a connected graph on $n$ vertices, where $n$ is even. Then $G$ has a perfect forest, and such a forest can be computed in polynomial time.
\end{proposition}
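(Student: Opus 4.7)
The plan is to produce the perfect forest as a minimum-size subgraph $H\subseteq E(G)$ whose degree sequence is all-odd, and then exploit minimality to force both the forest property and the induced property simultaneously.

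The first step is to show that such an $H$ exists at all. I would work with the vertex-edge incidence matrix of $G$ over $\mathrm{GF}(2)$. Every column has weight two, so the image is contained in the even-weight subspace of $\mathrm{GF}(2)^{V(G)}$, and a standard argument for connected graphs shows the image is exactly this subspace. Since $n$ is even, the all-ones vector $\mathbf{1}$ lies in the image, so there exists $H\subseteq E(G)$ with $\deg_H(v)$ odd for every $v\in V(G)$.

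The second step is the exchange argument. I would pick such an $H$ of minimum cardinality. If $H$ contained any cycle $C$, then $H\triangle E(C)$ would still give every vertex odd degree (symmetric difference with a cycle alters each degree by $0$ or $2$), but would use fewer edges, contradicting minimality; hence $H$ is a forest. Because every vertex has positive degree in $H$, this forest spans $V(G)$ and each component is an odd tree. Similarly, if some tree $T$ of $H$ failed to be induced in $G$, there would be an edge $e=uv\in E(G)\setminus E(H)$ with $u,v\in V(T)$; the unique cycle $C$ in $T\cup\{e\}$ has $|C|\ge 3$, and $H\triangle E(C)$ again preserves all degree parities while decreasing the edge count by $|C|-2\ge 1$, contradicting minimality. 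Hence each tree of $H$ is induced.

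For the polynomial-time claim, I would first solve the $\mathrm{GF}(2)$ linear system by Gaussian elimination to obtain some feasible $H$, and then repeatedly apply the two exchanges above whenever they are enabled. Each exchange strictly decreases $|E(H)|$, so the procedure halts in at most $|E(G)|$ iterations, with each iteration implementable by standard cycle-finding or chord-finding routines. The main obstacle is the induced-subgraph condition: producing any spanning odd forest is easy, but upgrading to the induced version requires the chord-cycle exchange, and the inequality $|C|\ge 3$ is precisely what makes this second exchange strictly improving.
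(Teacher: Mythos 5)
The paper does not prove this statement at all: it is imported as a known result (Scott's Perfect Forest Theorem, with the shorter proofs of Caro et al.\ cited), so there is no in-paper argument to compare against. Your proposal is a correct, self-contained proof, and it follows what is essentially the standard short proof of this theorem: over $\mathrm{GF}(2)$ the incidence map of a connected graph has image equal to the even-weight subspace (contained in it since each column has weight two, and equal by the dimension count $|E|-(|E|-|V|+1)=|V|-1$), so for even $n$ an all-odd-degree edge subset $H$ exists; taking $H$ minimum, the cycle exchange $H\triangle E(C)$ forces acyclicity, positivity of the odd degrees forces spanning, and the chord-cycle exchange with the strict gain $|C|-2\ge 1$ forces each tree to be induced. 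All three exchange computations preserve degree parities as you claim, and your algorithmic version is also sound: you do not actually need global minimality, only that no improving exchange remains, which is exactly the stopping condition of your iteration and is reached in at most $|E(G)|$ rounds. One tiny simplification you could make: an initial feasible $H$ can be obtained without Gaussian elimination by rooting a spanning tree and deciding each parent-edge bottom-up to fix the parity of each child, which succeeds precisely because $n$ is even; but this only affects the constant in the running time, not correctness.
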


\section{General bounds}\label{Section_3}
In this section, we prove \autoref{thm:main} and \autoref{transform}. We start with some simple lemmas derivable from the results by Sabidussi~\cite{sabidussi1987color}.
\autoref{color_reversal_path_ends} states that in a bicolored graph, the end points of an induced $P_3$ can be color-reversed in 8 local inversions, without changing the underlying graph.

\begin{lemma}\label{color_reversal_path_ends}
Let $B=(G,\beta)$ be a bicolored graph, and let $a,b,c\in V(G)$ with $a,b\in N_G(c)$ and $ab\notin E(G)$.  
Let $w=cabababc$ (of length~$8$). Then $B_w = B^{\{a,b\}}.$
\end{lemma}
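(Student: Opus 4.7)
The plan is to interpret the string $w = cabababc$ as three meaningful blocks: an outer pair $c\cdots c$ that converts the induced $P_3$ on $\{a,c,b\}$ into a triangle and then restores $G$, with an edge-reversal block $ababab$ in the middle.

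First I would observe that since $ab \notin E(G)$ but $a,b \in N_G(c)$, the single local inversion at $c$ creates the edge $ab$; that is, $ab \in E(G_c)$, so $\{a,b,c\}$ induces a triangle in $G_c$. Next, by \autoref{edge_reversal_lemma} applied to the bicolored graph $B_c = (G_c, \beta_c)$ along the edge $ab$, the block $ababab$ transforms $B_c$ into $(B_c)^{\{a,b\}}$. Crucially, this step leaves the underlying graph $G_c$ unchanged and flips only the colors of $a$ and $b$. Finally, I would apply local inversion at $c$ once more. At the graph level, \autoref{pro:1} forces $(G_c)_c = G$, so the underlying graph is restored.

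To conclude, I would track the color of every vertex through the three stages. Since local complementation at a vertex does not alter which vertices are adjacent to it, we have $N_{G_c}(c) = N_G(c)$, so the outer two $c$'s each flip precisely the vertices in $N_G(c)$. For any $v \in N_G(c) \setminus \{a,b\}$, these two flips cancel and the middle block does nothing, giving final color $\beta(v)$; for $v = a$ or $v = b$, the middle block contributes one extra flip, yielding $-\beta(v)$; for $v = c$ or $v \notin N_G(c) \cup \{c\}$, no step touches $v$. Hence $B_w = B^{\{a,b\}}$.

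There is no real obstacle here; the only point requiring attention is verifying that the color-flip pattern from the two $c$'s is identical (so that they cancel except through the middle block), which follows from the fact that $N_{G_c}(c) = N_G(c)$. Everything else is a direct application of \autoref{edge_reversal_lemma} and \autoref{pro:1}.
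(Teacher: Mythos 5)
Your proposal is correct and follows essentially the same route as the paper: decompose $w$ as $c(ababab)c$, note that the first inversion at $c$ creates the edge $ab$, apply \autoref{edge_reversal_lemma} in $G_c$, and undo the effect on the other neighbors of $c$ with the final inversion at $c$ via \autoref{pro:1}. Your explicit remark that $N_{G_c}(c)=N_G(c)$ (so the two outer flips cancel exactly) is a point the paper uses only implicitly, but the argument is the same.
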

\begin{proof}
Let $w'=ababab$ (of length~$6$). Then $w = cw'c$. 
In $G_c$, since $a,b\in N_G(c)$ and $ab\notin E(G)$, the edge $ab$ appears, so $a,b,c$ form a triangle in $G_c$.  
In $B_c$, relative to $B$, the colors of the neighbors of $c$ are flipped, while the colors of all other vertices remain unchanged. 
By \autoref{edge_reversal_lemma}, $(G_c)_{w'}=G_c$, and in $(B_c)_{w'}$ the colors of $a$ and $b$ are flipped (relative to $B_c$). 
Thus $B_{cw'}$ has underlying graph $G_c$, with the colors of all vertices in the neighborhood of $c$ flipped except those of $a$ and $b$, while the colors of the remaining vertices stay unchanged (relative to $B$). 
Finally, $(B_{cw'})_c$ has underlying graph $G_{cc}=G$ (by \autoref{pro:1}), and the colors of all neighbors of $c$ except $a$ and $b$ revert to their original values. Therefore, only the colors of $a$ and $b$ are flipped relative to $B$.\qed
\end{proof}

\autoref{p_3_end_reversal} states that an endvertex of an induced $P_3$ in a bicolored graph can be color-reversed using $7$ local inversions.

\begin{lemma}
    \label{p_3_end_reversal}
    Let $B=(G,\beta)$ be a bicolored graph, and let $a,b,c\in V(G)$ with $a,b\in N_G(c)$ and $ab\notin E(G)$.  
    Let $w = cabacba$ (of length~$7$). Then $B_w = B^a$.
\end{lemma}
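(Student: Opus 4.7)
The plan is to reduce the statement to \autoref{triangle_reversal_lemma} via an initial local inversion at $c$ that turns the induced $P_3$ into a triangle. The key syntactic observation is that appending $c$ to $w$ gives
\[
w\cdot c \;=\; cabacba c \;=\; c\cdot abacbac,
\]
where $abacbac$ is precisely the triangle-reversal string of \autoref{triangle_reversal_lemma}. So I would start by analyzing $B_{wc}$ in two ways, and then peel off the final $c$ using \autoref{pro:1}.

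First, I would observe that in $G_c$ the vertices $a,b,c$ form a triangle: both $a$ and $b$ lie in $N_G(c)$, and since $ab\notin E(G)$, local complementation at $c$ inserts the edge $ab$. Consequently \autoref{triangle_reversal_lemma} is applicable to the bicolored graph $B_c$ with respect to the triangle $\{a,b,c\}$, giving
\[
(B_c)_{abacbac} \;=\; (B_c)^{a}.
\]
Translating this via the identity above,
\[
B_{wc} \;=\; (B_c)_{abacbac} \;=\; (B_c)^{a}.
\]
On the other hand, $B_{wc}=(B_w)_c$, so $(B_w)_c = (B_c)^{a}$. Applying local inversion at $c$ once more on both sides and using \autoref{pro:1} yields
\[
B_w \;=\; \bigl((B_c)^{a}\bigr)_c.
\]

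It then remains to verify directly that $((B_c)^{a})_c = B^{a}$. For the underlying graph this is immediate from \autoref{pro:1}, since $(G_c)_c = G$. For the coloring, I would do a routine case split on a vertex $v$: if $v=a$, its color is flipped by $(\cdot)^a$ and then flipped again by the two applications of $(\cdot)_c$ (since $a\in N_{G_c}(c)=N_G(c)$), for a net single flip; if $v\in N_G(c)\setminus\{a\}$, it is flipped twice and thus restored; if $v\notin N_G(c)\cup\{c\}$ or $v=c$, it is never touched. So only $a$'s color changes overall, which is exactly $B^{a}$.

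There is no real obstacle here: the only substantive step is the identity $w\cdot c = c\cdot abacbac$ that lets us reuse \autoref{triangle_reversal_lemma}. The rest is bookkeeping of colors over four vertex classes relative to $c$, plus one use of the involutivity in \autoref{pro:1}.
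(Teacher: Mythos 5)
Your proposal is correct and follows essentially the same route as the paper: both reduce to \autoref{triangle_reversal_lemma} applied to $B_c$ (where $a,b,c$ become a triangle), using the identity $w\sim c(abacbac)c$ together with \autoref{pro:1}, and then track the color flips over the same vertex classes relative to $N_G(c)$. The only cosmetic difference is that you cancel the extra $c$ at the end of $wc$ rather than rewriting $w$ as $cw'c$ up front.
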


\begin{proof}
    Let $w' = abacbac$ (of length~$7$). Then
    \[
        cw'c = (c)(abacbac)(c) = (cabacba)(cc) \sim cabacba = w,
    \]
    where the simplification $cc \sim \epsilon$ follows from \autoref{pro:1}.
    
    In $G_c$, since $a,b\in N_G(c)$ and $ab\notin E(G)$, the edge $ab$ appears, so $a,b,c$ form a triangle in $G_c$.  
    In $B_c$, relative to $B$, the colors of the neighbors of $c$ are flipped, while the colors of all other vertices remain unchanged. 
    By \autoref{triangle_reversal_lemma}, $(G_c)_{w'} = G_c$, and in $(B_c)_{w'}$ the color of $a$ is flipped (relative to $B_c$). 
    
    Thus, in $B_{cw'}$, the underlying graph is $G_c$, with the colors of all vertices in $N_G(c)$ flipped, except for $a$, while the colors of all other vertices remain unchanged (relative to $B$). 
    
    Finally, $(B_{cw'})_c$ has underlying graph $G_{cc} = G$ (by \autoref{pro:1}), and the colors of all neighbors of $c$ are flipped again. Therefore, only the color of $a$ is flipped relative to $B$, yielding $B_w = B^a$.
    \qed
\end{proof}

\autoref{triangle_reversal_lemma} and \autoref{p_3_end_reversal} together imply \autoref{single}, which states that only $7$ moves are sufficient to color-reverse any vertex in a nontrivial connected graph.

\begin{lemma}
    \label{single}
    Let $G$ be a connected graph on $n\geq 3$ vertices. Let $a$ be any vertex in $G$. Let $B$ be any bicolored graph of $G$. Then there is a string $w$ of length $7$ such that $B_w = B^a$.
\end{lemma}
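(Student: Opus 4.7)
The plan is to prove the lemma by a short case analysis on the local structure around $a$. Since $G$ is connected with $n \geq 3$ vertices, $a$ has at least one neighbor, so one of the cases below always applies.

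\emph{Pendant case.} The first case I would handle is when $a$ has some neighbor $b$ with $N_G(b) = \{a\}$. A local inversion at such a $b$ leaves the underlying graph unchanged, because $N_G(b)$ is a singleton and contains no pair of distinct vertices whose adjacency could be toggled; and it flips only the color of $a$. Hence $B_b = B^a$. To reach length exactly $7$, I pad with three extra pairs of $b$'s and invoke Proposition~\ref{pro:1} to collapse each $bb$ to the identity; the string $w = bbbbbbb$ then has length $7$ and satisfies $B_w = B^a$.

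\emph{Non-pendant case.} In the remaining case every neighbor of $a$ has degree at least $2$ in $G$. I pick any neighbor $b$ of $a$ and any neighbor $c \neq a$ of $b$. If $ac \in E(G)$, then $\{a,b,c\}$ induces a triangle and Proposition~\ref{triangle_reversal_lemma} applies directly to give the string $w = abacbac$ of length $7$ with $B_w = B^a$. If $ac \notin E(G)$, then $a,b,c$ induce a $P_3$ with $b$ as its center; applying Lemma~\ref{p_3_end_reversal} after relabeling so that $b$ plays the role of the lemma's center vertex and $a$ the role of the flipped endpoint yields the string $w = bacabca$ of length $7$.

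The only step with any real content is the pendant case: unlike the other two, it is not a direct citation of a prior seven-move template, and the length-$7$ string is obtained by padding a single useful move with involutive $bb$ pairs. Everything else is a straightforward invocation of the triangle-reversal and induced-$P_3$-endpoint-reversal templates proved earlier in the paper, so I do not anticipate any further obstacles; the case split is exhaustive because ``$a$ has some pendant neighbor'' and ``every neighbor of $a$ has degree at least $2$'' are complementary.
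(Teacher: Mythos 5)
Your proof is correct, and its core is the same case analysis the paper intends when it says that \autoref{triangle_reversal_lemma} and \autoref{p_3_end_reversal} ``together imply'' the lemma: if $a$ has a neighbor $b$ of degree at least $2$, then a second neighbor $c$ of $b$ yields either a triangle through $a$ or an induced $P_3$ with $a$ as an endpoint, and the relabelled string $bacabca$ you give for the latter subcase is exactly the right instantiation of \autoref{p_3_end_reversal}. Where you go beyond the paper is the pendant case: when every neighbor of $a$ is a leaf (i.e., $G$ is a star centered at $a$), $a$ lies on no triangle and is never an endvertex of an induced $P_3$, so the two cited results alone do not apply; your observation that a single inversion at a pendant neighbor flips only $a$'s color, padded to length $7$ via \autoref{pro:1}, correctly closes this gap that the paper's one-line justification glosses over. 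In short, your argument is not just valid but strictly more complete than the paper's stated derivation.
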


\autoref{lem_p3_partition} states that the edges of an odd tree can be partitioned into copies of $P_3$ together with one $K_2$, with certain useful properties.

\begin{lemma}\label{lem_p3_partition}
Let $T$ be a rooted odd tree on $n$ vertices. Then there exists a partition $\mathcal{P}$ of the edges of $T$ into $(n-2)/2$ copies of $P_3$ together with one $K_2$, such that the following conditions are satisfied:
\begin{enumerate}
    \item In each $P_3$ of $\mathcal{P}$, the two endvertices are children (with respect to the given root) of its center.
    \item One of the endvertices of the $K_2$ in $\mathcal{P}$ is the root.
    \item Each vertex of $T$ is an endvertex of exactly one path in $\mathcal{P}$.
    \item The partition can be computed in polynomial time.
\end{enumerate}
\end{lemma}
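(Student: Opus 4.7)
The plan is to exploit the parity structure of an odd tree and give a direct bottom-up construction; nothing deep is needed beyond counting children. Since every vertex of $T$ has odd degree and the root $r$ contributes its degree entirely as children, $r$ has an odd number of children; every non-root vertex $v$ with parent $p$ has $\deg(v)-1$ children, which is even.

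I would then define the partition $\mathcal{P}$ as follows. For each non-root vertex $v$ with children $c_1, c_2, \ldots, c_{2k}$, group the child-edges of $v$ arbitrarily into $k$ unordered pairs and place them into $k$ copies of $P_3$ centered at $v$, namely $(c_1, v, c_2), (c_3, v, c_4), \ldots, (c_{2k-1}, v, c_{2k})$. For the root $r$, whose children are $c_0, c_1, \ldots, c_{2m}$, pair $c_1, \ldots, c_{2m}$ into $m$ copies of $P_3$ centered at $r$ in the same way, and let the remaining edge $rc_0$ be the unique $K_2$ of $\mathcal{P}$. Each edge of $T$ appears in exactly one of these pieces because each non-root vertex contributes all of its child-edges exactly once in $P_3$s centered at itself (and the root's last child-edge is the $K_2$).

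The verification is then routine. Condition (i) holds by construction, and condition (ii) is immediate since $r$ is an endvertex of the $K_2$. For condition (iii), I would argue that every non-root vertex $v$ appears as an endvertex precisely in the piece of $\mathcal{P}$ containing its parent edge, which is either a $P_3$ centered at $v$'s parent $p$ (with some sibling of $v$ as the other endvertex) or, in the case $p = r$ and $v = c_0$, the $K_2$; and $r$ appears as an endvertex only in the $K_2$. For the count, the handshaking identity gives
\[
|\{P_3 \in \mathcal{P}\}| \;=\; \sum_{v \neq r} \frac{\deg(v)-1}{2} + \frac{\deg(r)-1}{2} \;=\; \frac{\sum_{v} \deg(v) - n}{2} \;=\; \frac{2(n-1)-n}{2} \;=\; \frac{n-2}{2},
\]
and the total edge count $2\cdot(n-2)/2 + 1 = n-1$ matches $|E(T)|$. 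Since the construction only scans each vertex once and pairs up its children, it runs in linear time, giving (iv).

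I do not foresee a real obstacle: the only subtle point is ensuring that the parity of degrees forces both the child-counts and the global partition size to line up exactly, and both checks drop out of the handshaking identity. The lemma is essentially a bookkeeping statement about rooted odd trees.
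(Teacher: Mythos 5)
Your construction is correct, but it takes a genuinely different route from the paper. The paper proves the lemma by induction on $n$: it locates a vertex $v$ farthest from the root that is adjacent to a leaf, observes that $v$ must have at least two leaf children (since its degree is odd and at least $3$), peels off the $P_3$ formed by $v$ and two such leaves, and recurses on the remaining odd tree on $n-2$ vertices. You instead give a one-shot, non-inductive construction: since every edge is the child-edge of exactly one vertex, and every non-root vertex has an even number of children while the root has an odd number, you simply pair up the child-edges at each vertex into $P_3$s centered there, leaving one root--child edge as the $K_2$. Your verification of conditions (i)--(iv) is sound --- in particular the observation that each vertex is an endvertex only of the piece containing its parent-edge (with the root being an endvertex only of the $K_2$) cleanly establishes (iii), and the handshaking count confirms the number of $P_3$s is $(n-2)/2$. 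What your approach buys is a slightly more transparent argument and an explicitly linear-time algorithm from a single pass over the vertices; what the paper's induction buys is a template that matches how the partition is consumed later (the lemmas that apply \autoref{color_reversal_path_ends} path by path), though nothing in those later proofs actually depends on the inductive construction. Either proof suffices for the lemma as stated.
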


\begin{figure}
\begin{center}
\begin{tikzpicture}[level 1/.style={level distance=30, sibling distance=10mm, },, level 2/.style={level distance=30, sibling distance=10mm, }, level 3/.style={level distance=30, sibling distance=30mm, }, level 4/.style={level distance=30, sibling distance=10mm, }, edge from parent/.style={draw=black}]
		\node[draw=black, shape = circle, text width = 2mm,  ] (root) {} {
			child { node[draw=black, shape=circle, text width = 2mm, ] {}
				child { node[draw=black, shape=circle, text width = 2mm, ] {}}
				child { node[draw=black, shape=circle, text width = 2mm, ] {}
				}
			}
			child { node[draw=black, shape=circle, text width = 2mm, ] {}
			}
			child { node[draw=black, shape=circle, text width = 2mm, ] {}
			}
		};

		\node[draw=black, shape = circle, text width = 2mm, right=45mm of root ] (root2) {} {
			child { node[draw=black, shape=circle, text width = 2mm, ] {}
			}
			child { node[draw=black, shape=circle, text width = 2mm, ] {}
				}
		};
		\node[draw=black, shape = circle, text width = 2mm, right=20mm of root2 ] (root3) {} {
			child { node[draw=black, shape=circle, text width = 2mm, ] {}
			}
			child { node[draw=black, shape=circle, text width = 2mm, ] {}
				}
		};
		\node[draw=black, shape = circle, text width = 2mm, right=15mm of root3 ] (root4) {} {
			child { node[draw=black, shape=circle, text width = 2mm, ] {}
			}
		};

		\node[] at(root) {$r$};
		\node[] at(root-1) {$v$};
		\node[] at(root-2) {$x$};
		\node[] at(root-3) {$y$};
		\node[] at(root-1-1) {$u$};
		\node[] at(root-1-2) {$w$};

		\node[below=3mm of root-1-1] (c1) {};

        \node[name=ar] at($(root-3)!.5!(root2-1)$){};
        \node[name=le, left=8mm of ar] {};
        \node[name=ri, right=8mm of ar] {};
        \draw[-{Stealth[scale=1.5]},] (le) to (ri) node[yshift=3mm] at($(le)!.5!(ri)$) {};

		\node[] at(root2) {$v$};
		\node[] at(root2-1) {$u$};
		\node[] at(root2-2) {$w$};

		\node[] at(root3) {$r$};
		\node[] at(root3-1) {$x$};
		\node[] at(root3-2) {$y$};

		\node[] at(root4) {$r$};
		\node[] at(root4-1) {$v$};

		\node[below=15mm of root-2] {$T$};
		\node[below=15mm of root2] {$P_3$};
		\node[below=15mm of root3] {$P_3$};
		\node[below=15mm of root4] {$K_2$};
		\node[below=25mm of root3] {$\mathcal{P}$};
			
    \end{tikzpicture}
\end{center}
\caption{ Partition of $T$ into $P_3s$ and $K_2$ }
\label{partition}
\end{figure}
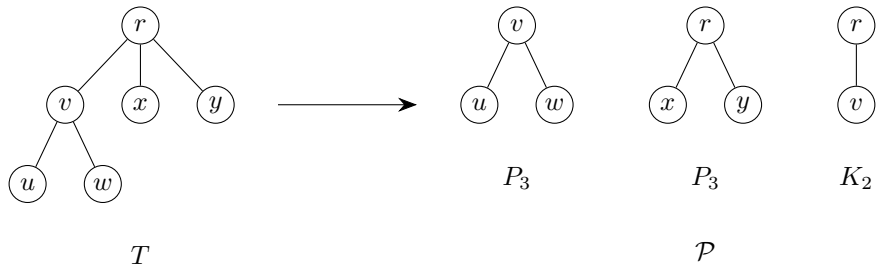
\begin{proof}

Clearly, $n$ is even. 
We proceed by induction on $n$. The base case $n = 2$ is trivial.
 
Assume $n \ge 4$ and that the claim holds for all smaller even values of $n$. 
Let $r$ be the root of $T$, and let $v$ be a vertex farthest from $r$ that is adjacent to a leaf. Then $v$ has no non-leaf descendant. Since $v$ is not a leaf and every degree in $T$ is odd, we have $\deg(v) \ge 3$. Hence, $v$ has at least two leaf neighbors; let $u$ and $w$ be two of them.

Delete the path $u\!-\!v\!-\!w$ (i.e., remove the edges $uv$ and $vw$) and then delete the isolated vertices $u$ and $w$ to obtain a tree $T'$ on $n-2$ vertices. All degrees in $T'$ remain odd (the degree of $v$ decreases by $2$, and the degrees of all other vertices are unchanged). Moreover, $T'$ has an even number of vertices. By the induction hypothesis, $E(T')$ can be partitioned into $(n-4)/2$ copies of $P_3$ together with one $K_2$. Adding the $P_3$ $u\!-\!v\!-\!w$ to this partition yields the desired partition $\mathcal{P}$. 

Since $v$ is the center of the $P_3$ $u\!-\!v\!-\!w$, statement (i) follows inductively. 
For every $P_3$ in $\mathcal{P}$ that contains $r$, by (i), $r$ must be its center. Since $r$ has odd degree, it must also be an endvertex of the $K_2$ in $\mathcal{P}$, proving (ii).
At each step, the process deletes the endvertices of the newly created $P_3$. Therefore, each vertex is the endvertex of at most one path in $\mathcal{P}$. As no vertex remains at the end, each vertex is the endvertex of exactly one path in $\mathcal{P}$, establishing statement (iii). Finally, the construction of $\mathcal{P}$ can clearly be carried out in polynomial time, proving statement (iv). See \autoref{partition} for a demonstration.
\qed
\end{proof}
 
The next two lemmas state that the vertices of an induced odd tree $T$ in a graph can be color-reversed using $4|V(T)|-4$ local inversions, by means of a string $w$ that ends (\autoref{lem_T_r_end}) or starts (\autoref{lem_T_r_start}) with a designated vertex $r$ of $T$.

\begin{lemma}\label{lem_T_r_end}
Let $G$ be a graph on $n$ vertices, and let $T$ be an induced odd tree of $G$ on at least $4$ vertices. 
Let $r$ be any vertex of $T$, and let $B = (G, \beta)$ be any bicolored graph of $G$. 
Then there exists a string $w \in V(T)^*$ such that the following conditions hold:
\begin{enumerate}
    \item $|w| = 4|V(T)|-4$,
    \item $B_w = B^{V(T)}$,
    \item $w$ ends with $r$.
\end{enumerate}
\end{lemma}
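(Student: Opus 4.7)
\noindent\textit{Proof plan.}
The plan is to combine the edge partition of \autoref{lem_p3_partition} with the length-$8$ endpoint-flipping string of \autoref{color_reversal_path_ends} and the length-$6$ edge-flipping string of \autoref{edge_reversal_lemma}, then save two moves by arranging the concatenation so that one junction contains a repeated letter that collapses via \autoref{pro:1}.

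First, I would root $T$ at $r$ and invoke \autoref{lem_p3_partition} to partition $E(T)$ into $k := (|V(T)|-2)/2$ induced copies of $P_3$ together with a single $K_2$ whose endvertices are $r$ and some~$v$. For each $P_3$ in the partition with center $c$ and children-endpoints $a,b$, the length-$8$ string $cabababc$ satisfies $B_{cabababc} = B^{\{a,b\}}$ and preserves the underlying graph, by \autoref{color_reversal_path_ends} (its hypothesis $ab \notin E(G)$ holds since $T$ is induced in $G$). For the $K_2$, the length-$6$ string $vrvrvr$ satisfies $B_{vrvrvr} = B^{\{r,v\}}$ and preserves the graph, by \autoref{edge_reversal_lemma}. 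Since each block preserves $G$, any concatenation of them is well-defined, and its cumulative colour effect is to flip the union of the endpoint-sets contributed by the blocks; by condition (iii) of \autoref{lem_p3_partition} this union is exactly $V(T)$. Hence the naive concatenation of all $k+1$ blocks has length $8k+6 = 4|V(T)| - 2$ and already achieves $B \mapsto B^{V(T)}$.

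To cut two characters while keeping the string ending in $r$, I would split on $\deg_T(v)$. If $\deg_T(v) \ge 3$, then by condition (iii) $v$ is endvertex of only the $K_2$ in $\mathcal{P}$, so $v$ must be the center of some $P_3$ $\pi \in \mathcal{P}$; I would concatenate the remaining $P_3$ blocks first, then the block for $\pi$ (which starts and ends with $v$), then $vrvrvr$, and delete the adjacent $vv$ at the junction between the last two blocks. If $\deg_T(v) = 1$, then $v$ is a leaf; since $T$ is connected with $|V(T)| \ge 4$ we get $\deg_T(r) \ge 2$, hence $\deg_T(r) \ge 3$ by oddness, and so $r$ is the center of some $P_3$ $\pi'$. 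I would then concatenate the remaining $P_3$ blocks first, then $vrvrvr$, then the block for $\pi'$ (which starts and ends with $r$), and delete the $rr$ at the junction between the last two blocks. In both cases the resulting $w$ has length exactly $4|V(T)|-4$, ends with $r$, and satisfies $B_w = B^{V(T)}$ because dropping an adjacent $xx$ preserves the operation by \autoref{pro:1}.

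The main obstacle I foresee is the edge case $\deg_T(v) = 1$, which rules out the natural plan of putting $vrvrvr$ last with a $vv$-cancellation; this is handled by using connectedness of $T$ and oddness of degrees to force $\deg_T(r) \ge 3$ and cancelling at an $rr$-junction instead. A minor bookkeeping concern is that other junctions in the concatenation may coincidentally also contain repeated letters, but this is harmless: we define $w$ as the naive concatenation with exactly one designated $xx$ pair removed, so $|w| = 4|V(T)|-4$ by construction regardless of the interior pattern.
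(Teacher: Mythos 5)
Your proposal is correct and follows essentially the same route as the paper: root $T$ at $r$, apply \autoref{lem_p3_partition}, realize each $P_3$ via \autoref{color_reversal_path_ends} and the $K_2$ via \autoref{edge_reversal_lemma}, and recover the two missing moves by cancelling one repeated letter at the junction between the $K_2$ block and a $P_3$ block centered at $v$ or at $r$. The paper's two cases (split on whether $\deg_T(r)>1$) are mirror images of yours (split on $\deg_T(v)$), and your handling of the degenerate case via connectedness and odd degrees matches the paper's.
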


\begin{proof}
Root $T$ at $r$. By \autoref{lem_p3_partition}, there exists a partition $\mathcal{P}$ of $E(T)$ into $(n-2)/2$ copies of $P_3$ together with one $K_2$, such that in each $P_3$ the endvertices are children of its center, and every vertex of $T$ is an endvertex of exactly one path in $\mathcal{P}$. Moreover, $r$ is one of the endvertices of the $K_2$ in $\mathcal{P}$. Let $rv$ denote this $K_2$.  

Since $T$ has at least $4$ vertices, either $r$ or $v$ has degree greater than $1$ in $T$.  

First, assume that $r$ has degree greater than $1$.
Then $r$ also belongs to some $P_3$ in $\mathcal{P}$. Since $r$ can be an endvertex of at most one path in $\mathcal{P}$, it must serve as the center of every $P_3$ containing it. Let $xry$ be one such $P_3$.

Apply \autoref{color_reversal_path_ends} on $B$ successively with each $P_3$ in $\mathcal{P}$, except $xry$. Let the resulting bicolored graph be $B'$, and let $w'$ be the corresponding string used. In $B'$, every vertex of $T$ is color-flipped, except $x,y,r,$ and $v$.  

Define
\[
w_1 = vrvrvr, \quad w_2 = rxyxyxyr, \quad w'' = (vrvrv)(xyxyxyr).
\]
By \autoref{pro:1},

\noindent$ w_1w_2 = (vrvrvr)(rxyxyxyr) \;\sim\; (vrvrv)(rr)(xyxyxyr) \;\sim\; (vrvrv)(xyxyxyr)$\\$ = w''.$

By \autoref{edge_reversal_lemma}, in $B'_{w_1}$ the colors of $r$ and $v$ (with respect to $B'$) are flipped without altering the underlying graph $G$. Then, by \autoref{color_reversal_path_ends}, in $(B'_{w_1})_{w_2}$ the colors of $x$ and $y$ are flipped, again without changing $G$. Hence 
\[
B'_{w_1w_2} = B'_{w''}
\] 
is a bicolored graph in which the colors of all vertices of $T$ are flipped with respect to $B$. Note that $w = w'w''$ ends with $w''$, which in turn ends with $r$.

Now, assume that $r$ has degree $1$.  
Then $v$ has degree greater than $1$ in $T$. Let $xvy$ be a $P_3$ containing $v$ in $\mathcal{P}$. In this case, the same argument works with
\[
w_1 = vxyxyxyv, \quad w_2 = vrvrvr, \quad w'' = (vxyxyxy)(rvrvr).
\]

Finally, we count the length of $w$. We applied $w''$ with $|w''|=12$, and we used \autoref{color_reversal_path_ends} on $(|V(T)|-4)/2$ paths, contributing $8 \cdot (|V(T)|-4)/2$. Therefore,
\[
|w| = 8\cdot \frac{|V(T)|-4}{2} + 12 = 4|V(T)| - 4.
\]
This completes the proof.
\qed
\end{proof}

\autoref{lem_T_r_start}, where the only difference from \autoref{lem_T_r_end} is that $w$ starts with $r$, can be proved in a similar fashion.

\begin{lemma}\label{lem_T_r_start}
Let $G$ be a graph on $n$ vertices, and let $T$ be an induced odd tree of $G$ on at least $4$ vertices. 
Let $r$ be any vertex of $T$, and let $B = (G, \beta)$ be any bicolored graph of $G$. 
Then there exists a string $w \in V(T)^*$ such that the following conditions hold:
\begin{enumerate}
    \item $|w| = 4|V(T)|-4$,
    \item $B_w = B^{V(T)}$,
    \item $w$ starts with $r$.
\end{enumerate}
\end{lemma}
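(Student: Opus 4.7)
The plan is to derive \autoref{lem_T_r_start} from \autoref{lem_T_r_end} by taking the reverse of the string, exploiting the fact that each local inversion is an involution.

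First I would establish an auxiliary identity $(\star)$: for every string $u \in V(G)^*$ and every bicolored graph $B$, $(B_u)_{u^R} = B$, where $u^R$ denotes the reverse of $u$. The proof is a routine induction on $|u|$; the empty-string case is trivial, and for $u = u'a$ we have $u^R = a\cdot (u')^R$, whence
\[
(B_u)_{u^R} \;=\; \bigl(((B_{u'})_a)_a\bigr)_{(u')^R} \;=\; (B_{u'})_{(u')^R} \;=\; B,
\]
using \autoref{pro:1} for the inner simplification and the induction hypothesis for the outer one.

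Given $(\star)$, the argument is short. Let $w_0$ be the string produced by \autoref{lem_T_r_end}; then $|w_0| = 4|V(T)| - 4$, $w_0$ ends with $r$, and $B_{w_0} = B^{V(T)}$ for every bicolored graph $B = (G, \beta)$. Define $w := w_0^R$. Then $|w| = 4|V(T)| - 4$ and $w$ starts with $r$, establishing conditions (i) and (iii). For (ii), fix any bicolored graph $B = (G, \beta)$ and set $B' := B^{V(T)}$. Applying \autoref{lem_T_r_end} to $B'$ gives $B'_{w_0} = (B')^{V(T)} = B$, since flipping the colors of $V(T)$ is an involution on bicolorations. Now $(\star)$ with $u = w_0$ and starting bicolored graph $B'$ yields $(B'_{w_0})_{w_0^R} = B'$; substituting $B'_{w_0} = B$ gives $B_w = B' = B^{V(T)}$, as required.

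I do not expect a substantive obstacle here: the identity $(\star)$ is essentially dictated by \autoref{pro:1}, and the remaining steps are bookkeeping. An alternative more in line with the author's hint of a proof "in a similar fashion" is to mimic the construction of \autoref{lem_T_r_end} verbatim and only replace the length-$12$ connector $w''$ by a mirrored version that starts with $r$: in the case $\deg_T(r) > 1$, use $(rxyxyxyr)(rvrvrv) \sim rxyxyxy\,vrvrv$, and in the case $\deg_T(r) = 1$, use $(rvrvrv)(vxyxyxyv) \sim rvrvr\,xyxyxyv$. Each such $w''$ has length $12$, starts with $r$, and flips exactly the colors of $\{r,v,x,y\}$ while preserving the underlying graph, so concatenating with the already-computed word $w'$ for the remaining $P_3$s of the partition $\mathcal{P}$ yields a total length of $4|V(T)| - 4$, matching the bound.
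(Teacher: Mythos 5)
Your proposal is correct, and your primary argument takes a genuinely different route from the paper's. The paper gives no explicit proof of \autoref{lem_T_r_start}; it only says the construction of \autoref{lem_T_r_end} can be mirrored, which is exactly your second, alternative argument (replacing the length-$12$ connector $w''$ by $rxyxyxy\,vrvrv$ resp.\ $rvrvr\,xyxyxyv$ and placing it at the front of the word) --- I checked the cancellations and block effects there and they are right. Your main argument via the reversal identity $(B_u)_{u^R}=B$ is slicker and more general: it shows once and for all that any ``ends with $r$'' statement of this shape dualizes to a ``starts with $r$'' statement of the same length, with no case analysis, and the induction establishing $(\star)$ from \autoref{pro:1} is routine. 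The one step you should make explicit is that the \emph{same} string $w_0$ from \autoref{lem_T_r_end} satisfies $B'_{w_0}=(B')^{V(T)}$ for the recolored graph $B'=B^{V(T)}$: as stated, \autoref{lem_T_r_end} only asserts existence of a string for each $B$. This is harmless --- the set of vertices flipped an odd number of times by a fixed string depends only on the underlying graph, a fact the paper itself records in the introduction for $B^V$ and which its construction of $w_0$ (depending only on $T$ and $r$, not on $\beta$) makes evident --- but it deserves a sentence. With that noted, both of your arguments are complete; the reversal argument buys brevity and reusability, while the mirrored construction stays self-contained and matches the authors' intent.
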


The next two lemmas state that if $G'$ is a nontrivial connected induced subgraph of a graph $G$ with $n' = |V(G')|$ even, then the vertices of $G'$ can be color-reversed (in $G$) using $4n'-4$ local inversions, by means of a string that ends (\autoref{lemma:even_v_end}) or starts (\autoref{lemma:even_v_start}) with a designated vertex $v\in V(G')$.

\begin{lemma}\label{lemma:even_v_end}
Let $G$ be a graph and let $G'$ be any connected induced subgraph of $G$ with $n' \geq 4$ vertices, where $n'$ is even. 
Let $v$ be any vertex of $G'$, and let $B = (G, \beta)$ be any bicolored graph of $G$. 
Then there exists a string $w \in V(G')^*$ such that the following conditions hold:
\begin{enumerate}
    \item $|w| \leq 4n'-4$,
    \item $B_w = B^{V(G')}$,
    \item $w$ ends with $v$.
\end{enumerate}
\end{lemma}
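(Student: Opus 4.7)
The plan is to apply the Perfect Forest Theorem (\autoref{perfect_forest_theo}) to $G'$, which is connected on an even number $n'\geq 4$ of vertices. This yields a perfect forest $F$ of $G'$ whose trees $T_1,\dots,T_k$ are induced odd trees of $G'$, and hence of $G$ (since $G'$ is induced in $G$), with $V(T_1),\dots,V(T_k)$ partitioning $V(G')$. Because each $T_i$ is an odd tree, $|V(T_i)|$ is even, so $|V(T_i)|\in\{2,4,6,\dots\}$.

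Let $T_v$ be the tree of $F$ containing $v$. I would handle every tree $T_i\neq T_v$ first, in an arbitrary order, and handle $T_v$ last. For a tree $T_i$ with $|V(T_i)|\geq 4$, I would apply \autoref{lem_T_r_end} to obtain a string of length $4|V(T_i)|-4$ that flips exactly the colors of $V(T_i)$ while preserving the underlying graph $G$; for $T_v$ I choose root $r=v$ so that its string ends with $v$. For a tree $T_i$ of size $2$, I would use the length-$2$ string of \autoref{k2k3p3}; if $T_v$ itself is such a $K_2$ with vertices $\{v,u\}$, I choose the string $uv$ so that it ends with $v$. Since each stage leaves the underlying graph unchanged, every still-unprocessed $T_j$ remains an induced odd tree of the current graph, so the per-tree lemmas continue to apply. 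Concatenating these strings produces a string $w\in V(G')^*$ with $B_w=B^{V(G')}$ that ends with the block chosen for $T_v$, which itself ends with $v$.

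For the length bound, let $a$ be the number of trees in $F$ with at least four vertices and $b$ the number of $K_2$-trees, so $a+b\geq 1$. Using $\sum_{|V(T_i)|\geq 4}|V(T_i)| = n'-2b$, the total length is
\[
|w| \;=\; \sum_{|V(T_i)|\geq 4}\bigl(4|V(T_i)|-4\bigr) + 2b \;=\; 4(n'-2b)-4a+2b \;=\; 4n'-4a-6b \;\leq\; 4n'-4,
\]
since $4a+6b\geq 4$ whenever $a+b\geq 1$.

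I expect no serious obstacle: the per-tree reversal is delegated to \autoref{lem_T_r_end} and \autoref{k2k3p3}; induced-odd-tree status is preserved across stages because local inversions do not alter the underlying graph; and scheduling $T_v$ last with the ``ends with $r$'' variant guarantees condition (iii). The only piece that requires care is the numerical check that the saving of $4$ moves from one large tree, or $6$ moves from each $K_2$, always meets the $4n'-4$ budget—which is immediate from the case analysis above.
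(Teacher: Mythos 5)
Your overall strategy is exactly the paper's: take a perfect forest of $G'$, reverse each tree separately (the one containing $v$ last, rooted at $v$, via \autoref{lem_T_r_end}), and concatenate. However, there is a genuine error in how you handle the $K_2$ trees. You invoke the length-$2$ string $uv$ from \autoref{k2k3p3}, but that proposition applies only when the \emph{entire underlying graph} is $K_2$. Here a $K_2$ tree of the perfect forest is merely an edge $uv$ embedded in the larger graph $G$: a local inversion at $u$ flips the colors of \emph{all} neighbors of $u$ in $G$ and complements $G[N_G(u)]$, so the string $uv$ will in general change both the coloring outside $\{u,v\}$ and the underlying graph. The correct tool is \autoref{edge_reversal_lemma}, which reverses the two endpoints of an edge inside an arbitrary graph using the length-$6$ string $uvuvuv$ (which also conveniently ends with $v$ when needed).

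This correction changes your cost for each $K_2$ tree from $2$ to $6$, so your total becomes
\[
|w| \;=\; \sum_{|V(T_i)|\geq 4}\bigl(4|V(T_i)|-4\bigr) + 6b \;=\; 4(n'-2b)-4a+6b \;=\; 4n'-4a-2b,
\]
with $a$ and $b$ as you defined them. The bound $|w|\leq 4n'-4$ still holds: if $a\geq 1$ then $4a+2b\geq 4$, and if $a=0$ then $n'=2b\geq 4$ forces $b\geq 2$, again giving $4a+2b\geq 4$. This is precisely the case analysis in the paper (phrased there as $|w|\leq 4n'-2t$ for $t\geq 2$ trees, with the single-tree case handled directly by \autoref{lem_T_r_end}). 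With that one substitution your argument is complete and matches the paper's proof.
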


\begin{proof}
By \autoref{perfect_forest_theo}, $G'$ admits a perfect forest $F$. Let $T_1, T_2, \ldots, T_t$ (with $t \geq 1$) be 
the trees in $F$. Without loss of generality, assume that $v \in T_t$.

For each tree $T_i$, we construct a string $w_i$ as follows.  
If $T_i$ is a $K_2$, say $xy$, then set $w_i = xyxyxy$. In particular, if $T_t$ is a $K_2$, say $uv$, then $w_t = uvuvuv$.  
If $T_i$ has more than two vertices, then $w_i$ is the string obtained from \autoref{lem_T_r_end}. In particular, if $T_t$ is a tree with more than two vertices, then $w_t$ is the string from \autoref{lem_T_r_end}, which ends in $v$.  

Let $w = w_1w_2\cdots w_t$. Clearly, $w$ ends with $v$. By \autoref{edge_reversal_lemma} and \autoref{lem_T_r_end}, the underlying graph of $B_w$ is still $G$, and the colors of all vertices in $G'$ are flipped relative to $B$, while the colors of all other vertices remain unchanged.

It remains to prove the bound on $|w|$. Suppose $t'$ trees in $F$ are $K_2$s, for some $0 \leq t' \leq t$. Then, by \autoref{edge_reversal_lemma} and \autoref{lem_T_r_end},  
\begin{align*}
|w| &\leq 6t' + 4(n' - 2t') - 4(t - t') && \text{$t'K_2$ covers $2t'$ vertices }\\
    &= 4n' - 4t + 2t' \\
    &\leq 4n' - 4t + 2t && \text{since $t' \leq t$} \\
    &= 4n' - 2t.
\end{align*}

If $t \geq 2$, we obtain $|w| \leq 4n' - 4$ as required.  
If $t = 1$, then $G'$ itself is a tree. Since $G'$ has at least four vertices, $T_1$ cannot be a $K_2$, and the bound follows from \autoref{lem_T_r_end}.
\qed
\end{proof}

\autoref{lemma:even_v_start}, where the only difference from \autoref{lemma:even_v_end} is that $w$ starts with $v$, can be proved in a similar fashion.

\begin{lemma}\label{lemma:even_v_start}
Let $G$ be a graph and let $G'$ be any connected induced subgraph of $G$ with $n' \geq 4$ vertices, where $n'$ is even. 
Let $v$ be any vertex of $G'$, and let $B = (G, \beta)$ be any bicolored graph of $G$. 
Then there exists a string $w \in V(G')^*$ such that the following conditions hold:
\begin{enumerate}
    \item $|w| \leq 4n'-4$,
    \item $B_w = B^{V(G')}$,
    \item $w$ starts with $v$.
\end{enumerate}
\end{lemma}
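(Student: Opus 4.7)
The plan is to mirror the proof of \autoref{lemma:even_v_end}, with two changes: relabel the trees of a perfect forest so that the designated vertex $v$ lies in the \emph{first} tree rather than the last, and invoke \autoref{lem_T_r_start} in place of \autoref{lem_T_r_end} when handling that first tree. All other ingredients---\autoref{perfect_forest_theo}, \autoref{edge_reversal_lemma}, and the same counting argument---will transfer verbatim.

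First I would apply the Perfect Forest Theorem (\autoref{perfect_forest_theo}) to obtain a perfect forest $F$ of $G'$ with odd trees $T_1, T_2, \ldots, T_t$ ($t \geq 1$), relabeled so that $v \in T_1$. Then I would build a string $w_i$ for each tree as follows. For $T_1$: if $T_1$ is a $K_2$ with endpoints $v$ and $u$, I set $w_1 = vuvuvu$, which by \autoref{edge_reversal_lemma} flips the colors of $v$ and $u$ while preserving $G$, and starts with $v$; otherwise $|V(T_1)| \geq 4$ (because an odd tree has an even number of vertices), and \autoref{lem_T_r_start} with $r = v$ yields a string of length $4|V(T_1)|-4$ that starts with $v$ and flips exactly $V(T_1)$. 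For each $i \geq 2$, I would let $w_i$ be $xyxyxy$ when $T_i$ is a $K_2$ with edge $xy$, and otherwise the string from \autoref{lem_T_r_end} (or \autoref{lem_T_r_start}) anchored at an arbitrary vertex of $T_i$; the endpoints of $w_i$ for $i \geq 2$ are irrelevant. I would then set $w = w_1 w_2 \cdots w_t$. Because the $V(T_i)$ partition $V(G')$ and each $w_i$ acts only on $V(T_i)$ without altering $G$, this gives $B_w = B^{V(G')}$, and $w$ clearly starts with $v$.

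For the length bound, supposing $t'$ of the trees are $K_2$s, the same counting as in \autoref{lemma:even_v_end} gives
\[
|w| \;\leq\; 6t' + 4(n' - 2t') - 4(t - t') \;=\; 4n' - 2t,
\]
which is at most $4n' - 4$ whenever $t \geq 2$. If instead $t = 1$, then $G'$ is itself the induced odd tree $T_1$, and since $n' \geq 4$ it cannot be a $K_2$, so the bound $4n'-4$ follows directly from \autoref{lem_T_r_start}. The hard part here is essentially nonexistent: this lemma is the symmetric counterpart of \autoref{lemma:even_v_end}, and the only care required is placing the $v$-containing tree at the \emph{front} of the concatenation and invoking the ``starts-with-$r$'' version (\autoref{lem_T_r_start}) for it in place of the ``ends-with-$r$'' version used in the original proof.
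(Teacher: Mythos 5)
Your proposal is correct and is exactly the ``similar fashion'' argument the paper intends for \autoref{lemma:even_v_start}: reorder the trees of the perfect forest so the one containing $v$ comes first, anchor it with \autoref{lem_T_r_start} instead of \autoref{lem_T_r_end}, and reuse the same counting. No issues.
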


\autoref{lemma:odd} states that if $G'$ is a nontrivial connected induced subgraph of $G$ with an odd number $n'$ of vertices, then the vertices of $G'$ can be color-reversed in $G$ using at most $4n'-3$ local inversions.

\begin{lemma}\label{lemma:odd}
Let $G$ be a graph and let $G'$ be any connected induced subgraph of $G$ with $n' \geq 5$ vertices, where $n'$ is odd. 
Let $B = (G, \beta)$ be any bicolored graph of $G$. 
Then there exists a string $w \in V(G')^*$ such that the following conditions hold:
\begin{enumerate}
    \item $|w| \leq 4n' - 3$,
    \item $B_w = B^{V(G')}$.
\end{enumerate}
\end{lemma}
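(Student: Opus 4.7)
The plan is to reduce to the even case by singling out a vertex $u \in V(G')$: flip $V(G')\setminus\{u\}$ via \autoref{lemma:even_v_end}/\autoref{lemma:even_v_start}, flip $u$ via \autoref{p_3_end_reversal} or \autoref{triangle_reversal_lemma}, and save two moves in the concatenation by arranging for a repeated letter at the join (which cancels via \autoref{pro:1}). The budget works out: $4(n'-1)-4 = 4n'-8$ for the even part plus $7$ for the single-vertex part, minus $2$ for the cancellation, yields exactly $4n'-3$.

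\textbf{Case 1 ($G' \neq K_{n'}$).} I will select $u$ to be a non-cut, non-universal vertex of $G'$. Such a $u$ exists: $G'$ has at least two non-cut vertices, and if all non-cut vertices were universal, then any non-universal vertex $x$ (which exists since $G'\neq K_{n'}$) would be a cut vertex, contradicting the fact that any universal vertex of $G'$ survives in $G'-x$ and remains adjacent to every other surviving vertex. Non-universality of $u$ yields a non-neighbor of $u$; connectedness of $G'$ then provides a neighbor $c$ of $u$ together with a neighbor $b$ of $c$ with $ub\notin E(G')$, that is, $\{u,c,b\}$ induces a $P_3$ with endpoints $u$ and $b$. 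Let $G''=G'-u$, which is connected (since $u$ is non-cut) with $n'-1\geq 4$ vertices (even). By \autoref{lemma:even_v_end} applied to $G''$ with designated vertex $c$, there is $w_1\in V(G'')^*$ of length at most $4n'-8$ ending in $c$ with $B_{w_1}=B^{V(G'')}$. Set $w_2=cubucbu$; by \autoref{p_3_end_reversal} (applied with center $c$ and endpoints $u,b$) we get $(B_{w_1})_{w_2}=(B_{w_1})^u$, hence $B_{w_1 w_2}=B^{V(G')}$. Since $w_1$ ends in $c$ and $w_2$ starts with $c$, \autoref{pro:1} lets me delete the middle pair $cc$, yielding a string $w$ with $|w|\leq (4n'-8)+7-2=4n'-3$ and $B_w=B^{V(G')}$.

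\textbf{Case 2 ($G' = K_{n'}$).} Fix any $u$ and any two other vertices $b,c$; these form a triangle in $G'$. By \autoref{triangle_reversal_lemma}, $w_2=ubucbuc$ has length $7$, ends in $c$, and satisfies $B_{w_2}=B^u$. Let $G''=G'-u=K_{n'-1}$, connected and of even order $n'-1\geq 4$. By \autoref{lemma:even_v_start} applied to $G''$ with designated vertex $c$, there is $w_1$ of length at most $4n'-8$ starting with $c$ such that $(B')_{w_1}=(B')^{V(G'')}$ for every bicolored graph $B'$ on $G$. Then $B_{w_2 w_1}=(B^u)^{V(G'')}=B^{V(G')}$; since $w_2$ ends in $c$ and $w_1$ starts with $c$, \autoref{pro:1} again permits deleting the middle $cc$, producing $w$ with $|w|\leq 4n'-3$ as required.

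The delicate step is the existence claim in Case 1 --- that a non-cut, non-universal vertex exists whenever $G'$ is connected and non-complete. Once that is in hand, the proof is a direct assembly of an even-case reversal, a single-vertex reversal (via an induced $P_3$ or triangle), and a $cc$-cancellation at the boundary of the two strings.
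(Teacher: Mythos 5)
Your proposal is correct and follows essentially the same route as the paper: remove a non-cut vertex $u$, flip $V(G')\setminus\{u\}$ via the even-order lemmas, flip $u$ with the $7$-move triangle or induced-$P_3$ gadget, and save two moves by cancelling the repeated letter $c$ at the junction via \autoref{pro:1}. The only (cosmetic) difference is your case split on whether $G'$ is complete---requiring your extra argument that a non-cut, non-universal vertex exists, which is fine---whereas the paper splits on whether the chosen non-cut vertex lies on a triangle.
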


\begin{proof}
    Let $a$ be any vertex of $G'$ such that $G'' = G' - a$ is connected. 
    
    First, assume that $a$ lies on a triangle, say $abc$ in $G'$.  
    Let $w_1 = abacbac$.  
    Applying $w_1$ to $B$ yields a bicolored graph $B_1$. By \autoref{triangle_reversal_lemma}, $B_1$ has the same underlying graph $G$, and compared to $B$, only the color of $a$ is flipped.  
    
    Since $G''$ is a connected induced subgraph of $G$ with $n'-1 \geq 4$ vertices (which is even), we can apply \autoref{lemma:even_v_start} to obtain a string $w_2$ starting with $c$ that flips the colors of all vertices of $G''$ in $B$. Therefore, $B_{w_1w_2}$ differs from $B$ exactly in that all vertices of $G'$ have their colors flipped, while the underlying graph remains $G$.  
    
    Let $w$ be the string obtained from $w_1w_2$ by removing the final $c$ of $w_1$ and the initial $c$ of $w_2$. By \autoref{pro:1}, $B_{w_1w_2} = B_w$. Furthermore,
    \begin{align*}
        |w| &= |w_1| + |w_2| - 2 \\
            &\leq 7 + (4(n'-1)-4) - 2 && \text{by \autoref{lemma:even_v_start}} \\
            &= 4n' - 3.
    \end{align*}

    Now assume that $a$ does not lie on any triangle in $G'$. 
    Since $G''$ is connected and has at least five vertices, there exist vertices $b, c \in V(G'')$ such that $acb$ is an induced $P_3$. 
    In this case, let $w_1 = cabacba$.  
    By arguments analogous to the previous case, together with \autoref{p_3_end_reversal} and \autoref{lemma:even_v_end}, we obtain the desired string $w$ satisfying the two conditions. 
    \qed
\end{proof}

We are now ready to prove \autoref{thm:main}.

\begin{theorem}
    \label{thm:main}
    Let $G$ be a connected graph with $n \geq 2$ vertices. If $n$ is even, then $cr(G) \leq 4n - 4$, and if $n$ is odd, then $cr(G) \leq 4n - 3$.
\end{theorem}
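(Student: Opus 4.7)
The plan is to reduce the theorem directly to the machinery already assembled, treating small $n$ as explicit base cases and using the connected induced subgraph lemmas with $G'=G$ for larger $n$. Since $G$ is connected on $n \geq 2$ vertices and we want to flip the colors of \emph{all} vertices, it is natural to invoke \autoref{lemma:even_v_end} (or \autoref{lemma:even_v_start}) in the even case and \autoref{lemma:odd} in the odd case, taking $G'=G$ and $n'=n$; the only issue is that these lemmas impose $n'\ge 4$ and $n'\ge 5$ respectively, so the values $n\in\{2,3\}$ must be handled separately.

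First I would dispose of the base cases using \autoref{k2k3p3}. For $n=2$, the graph $G$ is $K_2$, and the string $ab$ of length $2$ color-reverses $B$; since $2\le 4= 4n-4$, the bound holds. For $n=3$, the connected graph $G$ is either $K_3$ or $P_3$, and in both subcases \autoref{k2k3p3} supplies an explicit string of length $9$, which matches $4n-3=9$ exactly.

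For the general case with $n\ge 4$, I split on parity. If $n$ is even and $n\ge 4$, I apply \autoref{lemma:even_v_end} to the subgraph $G'=G$ itself (with $n'=n$ and any choice of $v$); the lemma directly yields a string $w$ of length at most $4n-4$ such that $B_w = B^{V(G)}$, which is precisely a color reversal of $G$. If $n$ is odd and $n\ge 5$, I apply \autoref{lemma:odd} to $G'=G$ to obtain a string $w$ of length at most $4n-3$ with $B_w = B^{V(G)}$. Since the color reversal number is defined as the minimum over all initial bicolorations and the argument holds for any $\beta$, we conclude $cr(G)\le 4n-4$ (resp.\ $cr(G)\le 4n-3$).

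There is essentially no obstacle left in this step: the heavy lifting—the perfect forest decomposition, the $P_3/K_2$ partition of each odd tree, and the parity handling when $n$ is odd (using either a triangle or an induced $P_3$ through a chosen vertex to make the residual subgraph have even order)—has already been absorbed into \autoref{lemma:even_v_end}, \autoref{lemma:even_v_start}, and \autoref{lemma:odd}. The only subtlety worth flagging is making sure the small-$n$ cases $n=2$ and $n=3$ are covered cleanly by \autoref{k2k3p3}, since the induced-subgraph lemmas do not apply there; once this is noted, the theorem follows immediately by the case split above.
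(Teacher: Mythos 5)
Your proposal is correct and follows essentially the same route as the paper: the base cases $n=2$ and $n=3$ are handled by \autoref{k2k3p3}, and for $n\ge 4$ the theorem is obtained by applying \autoref{lemma:even_v_end} (even $n$) or \autoref{lemma:odd} (odd $n$) with $G'=G$. No gaps.
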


\begin{proof}
    If $n = 2$, then by \autoref{k2k3p3}, we have $cr(G)\leq 2\leq 4\cdot 2 - 4$. 
    If $n = 3$, then by \autoref{k2k3p3}, we have $cr(G) \leq 9 \leq 4 \cdot 3 - 3$.
    If $n \geq 4$ and $n$ is even, then the statement follows from \autoref{lemma:even_v_end}.  
    If $n \geq 5$ and $n$ is odd, then the statement follows from \autoref{lemma:odd}. 
    \qed
\end{proof}

\autoref{main:cor} follows immediately from \autoref{thm:main}, noting that each component of a graph with even order may itself have odd order.

\begin{corollary}
    \label{main:cor}
    Let $G$ be a graph with $n$ vertices and $t \geq 2$ components, and suppose $G$ has no isolated vertices. Then $cr(G) \leq 4n - 3t$.
\end{corollary}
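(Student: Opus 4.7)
The plan is to decompose $G$ into its connected components and invoke \autoref{thm:main} on each component independently, exploiting the fact that a local inversion at a vertex $v$ can only alter colors and adjacencies within the component containing $v$.

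More concretely, let $C_1, C_2, \ldots, C_t$ be the connected components of $G$, and let $n_i = |V(C_i)|$. Since $G$ has no isolated vertex, each $n_i \geq 2$, so \autoref{thm:main} applies to every $C_i$. For each $i$, I would pick a string $w_i \in V(C_i)^*$ that color-reverses $C_i$, of length at most $4n_i - 4$ if $n_i$ is even and at most $4n_i - 3$ if $n_i$ is odd; in either case, $|w_i| \leq 4n_i - 3$. Because no vertex of $C_i$ has a neighbor outside $C_i$ at any point during the execution of $w_i$ (local inversion at a vertex of $C_i$ only touches the neighborhood of that vertex, which stays inside $C_i$), applying $w_i$ to the full bicolored graph $B$ reverses the colors of precisely $V(C_i)$ while leaving the underlying graph $G$ and the colors of all other vertices unchanged.

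Concatenating $w = w_1 w_2 \cdots w_t$ then yields $B_w = B^{V(G)}$, with
\[
    |w| \;=\; \sum_{i=1}^{t} |w_i| \;\leq\; \sum_{i=1}^{t} (4n_i - 3) \;=\; 4n - 3t,
\]
giving the claimed bound. There is no real obstacle here; the remark in the statement that a graph of even order may have components of odd order simply explains why one cannot improve $4n - 3t$ to $4n - 4t$ in general, since the $4n_i - 3$ estimate from \autoref{thm:main} is the one we have to use whenever a component has odd order, and nothing prevents every component from being odd.
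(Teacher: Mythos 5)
Your proposal is correct and matches the paper's intended argument: the paper gives no explicit proof, stating only that the corollary follows immediately from \autoref{thm:main} once one notes that components may have odd order, and your component-by-component application of \autoref{thm:main} with the uniform bound $4n_i-3$ is exactly that argument spelled out. The observation that local inversions within a component cannot affect other components is the (implicit) justification the paper relies on as well.
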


We now state and prove our second main result.

\begin{theorem}
    \label{transform}
    Let $G$ be a connected graph on $n \geq 2$ vertices. 
    Let $B = (G, \beta)$ and $B' = (G, \beta')$ be two bicolored graphs on $G$. 
    Then there exists a string $w$ of length at most $ \left \lfloor \tfrac{11n-3}{2}\right \rfloor$ such that $B_w = B'$.
\end{theorem}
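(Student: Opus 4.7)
The plan is to compare two strategies for producing $w$ with $B_w=B'$, and take whichever yields the shorter string. Let $A=\{v\in V(G):\beta(v)\neq\beta'(v)\}$ and write $k=|A|$, so the task reduces to flipping the colors on exactly the vertices of $A$ while leaving the underlying graph equal to $G$.

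The base case $n=2$ is easy: here $G=K_2$ and $k\in\{0,1,2\}$, so at most $2$ inversions suffice (the empty string, a single inversion at the other vertex, or the string from \autoref{k2k3p3}), which is well inside the bound $\lfloor 19/2\rfloor=9$. I will dispose of this case at the start.

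For $n\geq 3$, the first strategy is to flip each vertex of $A$ individually using the $7$-move string from \autoref{single}, and to concatenate the $k$ resulting strings. Since each such string preserves the underlying graph $G$, the concatenation is a valid sequence of local inversions, giving total length $7k$ and net effect $B^A$. The second strategy is to first invoke \autoref{thm:main} to color-reverse all of $V(G)$ in at most $4n-3$ moves (which again preserves $G$), reaching $B^V$, and then apply the first strategy to the complement $V(G)\setminus A$, adding $7(n-k)$ moves and producing $(B^V)^{V\setminus A}=B^A=B'$. Hence the second strategy has length at most $(4n-3)+7(n-k)=11n-3-7k$.

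Taking the shorter of the two gives a string of length at most
\[
\min\bigl(7k,\; 11n-3-7k\bigr)\;\le\;\tfrac{(7k)+(11n-3-7k)}{2}\;=\;\tfrac{11n-3}{2},
\]
and since the number of local inversions is an integer, it is at most $\lfloor(11n-3)/2\rfloor$. The only point requiring care will be checking that both \autoref{single} and the string produced by \autoref{thm:main} restore the underlying graph to $G$, so that the concatenations compose cleanly; this is immediate from the statements (each yields a string whose action on the graph is the identity), so no additional work is needed beyond this bookkeeping.
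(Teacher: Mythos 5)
Your proof is correct and follows essentially the same route as the paper's: both compare the strategy of fixing the difference set directly against the strategy of a global color reversal followed by fixing its complement, and balance the resulting bounds $7k$ and $7(n-k)+4n-3$ at $\tfrac{11n-3}{2}$. The only divergence is that you implement ``flip a set $S$'' as $|S|$ independent applications of \autoref{single}, whereas the paper also invokes \autoref{thm:main} on the non-isolated part of the induced subgraph before bounding the cost by $7|S|$ anyway, so your version is a harmless simplification (and your explicit treatment of $n=2$, where \autoref{single} does not apply, is a point the paper glosses over).
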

\begin{proof}
    Let $V_0 = \{v \in V(G) : \beta(v) = \beta'(v)\}$ with $n_0 = |V_0|$, and 
    let $V_1 = \{v \in V(G) : \beta(v) = -\beta'(v)\}$ with $n_1 = |V_1| = n - n_0$.
    If $n_1 = 0$, then $B = B'$ and we are done.
    If $n_0 = 0$, then $\beta = -\beta'$, and by \autoref{thm:main} there is a string of length at most $4n-3 \leq \frac{11n-3}{2}$ (since $n \ge 2$).

    Assume henceforth $n_0, n_1 \ge 1$.
    Let $G_0 = G[V_0]$ and $G_1 = G[V_1]$.
    Let $I_0$ (resp. $I_1$) be the set of isolated vertices in $G_0$ (resp. $G_1$), and write $i_0 = |I_0|$, $i_1 = |I_1|$.

    We consider two strategies and take the cheaper.

    \smallskip\noindent\emph{(1) Fix $V_1$ directly.}
    Use \autoref{single} on each vertex of $I_1$ (cost $7$ per isolate), then apply \autoref{thm:main} on $G_1 - I_1$ (cost at most $4(n_1 - i_1) - 3$ if $n_1 - i_1 > 0$, else $0$).
    Thus, the total cost satisfies
    \[
        p \;\le\; 7 i_1 + \max\{\,4(n_1 - i_1) - 3,\,0\,\} \;\le\; 7 n_1.
    \]

    \smallskip\noindent\emph{(2) Flip $V_0$, then flip all of $G$.}
    First use \autoref{single} on $I_0$ (cost $7$ per isolate), then apply \autoref{thm:main} on $G_0 - I_0$ (cost at most $4(n_0 - i_0) - 3$ if $n_0 - i_0 > 0$, else $0$), and finally apply \autoref{thm:main} on $G$ (cost at most $4n - 3$). 
    Hence
    \[
        q \;\le\; 7 i_0 + \max\{\,4(n_0 - i_0) - 3,\,0\,\} + (4n - 3) \;\le\; 7 n_0 + 4n - 3.
    \]

    For the given partition $(n_0,n_1)$ we can realize $B'$ from $B$ with length at most $\min\{p,q\} \le \min\{\,7(n-n_0),\; 7n_0 + 4n - 3\,\}$.
    Maximizing this upper bound over $0 \le n_0 \le n$ occurs when the two arguments are equal:
    \[
        7(n - n_0) = 7n_0 + 4n - 3 
        \;\Longrightarrow\;
        14n_0 =3n+3,
    \]
    which yields
    \[
        \min\{p,q\} \;\le\; 7\bigl(n - n_0\bigr)\;=\frac{14(n-n_0)}{2}\;=\;\frac{14n-3n-3}{2}
        \;= \;  \frac{11n - 3}{2} .
    \]
    Therefore, there exists a string $w$ of length at most $ \left \lfloor \tfrac{11n-3}{2}\right \rfloor$ such that $B_w = B'$.
    \qed
\end{proof}

\autoref{cor:transform} follows immediately.

\begin{corollary}
    \label{cor:transform}
    Let $G$ be a graph without isolated vertices, and let $B = (G, \beta)$ and $B' = (G, \beta')$ be two bicolored graphs on $G$. 
    Then there exists a string $w$ of length at most $\left \lfloor \tfrac{11n-3t}{2}\right \rfloor$ such that $B_w = B'$, where $t$ is the number of components of $G$.
\end{corollary}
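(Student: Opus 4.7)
The plan is to apply \autoref{transform} component by component and concatenate the resulting strings. Let $C_1, \ldots, C_t$ denote the connected components of $G$, and write $n_i := |V(C_i)|$. Since $G$ has no isolated vertices, every component satisfies $n_i \ge 2$, so \autoref{transform} is applicable on each $C_i$.

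For each $i$, let $B_i$ and $B'_i$ be the restrictions of $B$ and $B'$ to $V(C_i)$. By \autoref{transform} there exists a string $w_i \in V(C_i)^*$ with $|w_i| \le \lfloor (11 n_i - 3)/2 \rfloor$ such that $(B_i)_{w_i} = B'_i$. The key structural observation is that a local inversion at a vertex $v \in V(C_i)$ only toggles adjacencies and flips colors inside $C_i$, because $N_G(v) \subseteq V(C_i)$; hence the strings $w_i$ act on disjoint parts of the graph. Concatenating them yields $w = w_1 w_2 \cdots w_t$ with $B_w = B'$.

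The remaining task is to bound $|w| \le \sum_{i=1}^t \lfloor (11 n_i - 3)/2 \rfloor$ by $\lfloor (11 n - 3 t)/2 \rfloor$, where $n = \sum_i n_i$. This follows by iterating the elementary inequality $\lfloor a \rfloor + \lfloor b \rfloor \le \lfloor a + b \rfloor$ across the $t$ components, together with the identity $\sum_i (11 n_i - 3)/2 = (11 n - 3 t)/2$. The only step that warrants explicit care is this floor-sum estimate; once it is noted, the corollary is immediate from \autoref{transform} and the component-wise independence of local inversions, and I do not expect a genuine obstacle in the argument.
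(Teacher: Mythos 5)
Your proposal is correct and is essentially the argument the paper intends: the corollary is derived by applying \autoref{transform} to each component separately (valid since local inversions at a vertex of a component affect only that component), concatenating the strings, and summing the per-component bounds via the superadditivity of the floor function. No gap.
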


\section{Stars and complete graphs}\label{Section_4}
In this section, we obtain tighter bounds for stars and complete graphs.

\begin{theorem}\label{star}
    Let $S_n$ be the star graph on $n \geq 2$ vertices. Then $cr(S_n) \leq 3n$.
\end{theorem}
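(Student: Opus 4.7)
The plan is to explicitly construct, for every $S_n$ with $n \geq 2$, a string $w$ of length at most $3n$ satisfying $B_w = B^{V}$. The base case $n=2$ is handled by \autoref{k2k3p3}, since $S_2 = K_2$ has color-reversal cost $2 \leq 6$. For $n \geq 3$, denote the center by $c$ and the leaves by $\ell_1,\ldots,\ell_{n-1}$.

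The key building block is a \emph{pair-flip}. For any two leaves $\ell_i,\ell_j$, the vertices $\ell_i,c,\ell_j$ form an induced $P_3$ with $c$ as the degree-$2$ vertex, so by \autoref{color_reversal_path_ends} the length-$8$ string $c\ell_i\ell_j\ell_i\ell_j\ell_i\ell_j c$ flips only the colors of $\ell_i,\ell_j$ and preserves the graph. I would pair up the leaves and concatenate such pair-flips. Since each pair-flip starts and ends with $c$, every concatenation creates the substring $cc$, which by \autoref{pro:1} can be deleted without changing the composed effect. Hence $k$ consecutive pair-flips cost $8 + 6(k-1) = 6k+2$ moves rather than $8k$.

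If $n$ is odd, all $n-1$ leaves can be paired (with $k=(n-1)/2$), flipping every leaf in $3n - 1$ moves while keeping the star intact; one extra local inversion at any leaf flips only $c$ (whose sole neighbor it is) and does not change the graph, yielding exactly $3n$ moves. If $n$ is even, pair $n-2$ leaves (with $k=(n-2)/2$), costing $3n - 4$ moves; the remaining leaf $\ell_{n-1}$ and the center $c$ can then be flipped together by the edge-reversal string $c\ell_{n-1}c\ell_{n-1}c\ell_{n-1}$ of \autoref{edge_reversal_lemma}, which has length $6$ and starts with $c$. A final $cc$-cancellation with the end of the preceding block saves $2$ more moves, for a total of $(3n-4) + 6 - 2 = 3n$.

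The main obstacle is really just careful bookkeeping: verifying that repeated $cc$-merging of pair-flip (and edge-flip) blocks correctly composes into the intended color-flipping action, which follows because each block preserves the star and acts independently on the colors of its designated vertices, and checking that the arithmetic matches $3n$ in both parities. The small corner case $n=3$ is consistent: one pair-flip (8 moves) plus one leaf-inversion gives $9 = 3n$, matching \autoref{k2k3p3}.
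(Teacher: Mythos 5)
Your construction is correct and achieves the stated bound of $3n$ exactly, but it takes a genuinely different route from the paper. The paper writes down a single uniform string
$(c_1c_0c_1c_0c_1)(c_2c_0c_2)\cdots(c_{n-1}c_0c_{n-1})(c_0)$
and proves by induction on the number of leaves that it works: each new leaf is handled by an edge-reversal on the center--leaf edge (Proposition~\ref{edge_reversal_lemma}) followed by a single inversion at that leaf to restore the center's color, with $c_0c_0$ and $c_{n-1}c_{n-1}$ cancellations bringing the marginal cost down to $3$ per leaf, and no parity split is needed. You instead take the induced-$P_3$ pair-flip of Lemma~\ref{color_reversal_path_ends} as the building block, pair up the leaves, amortize via $cc$-cancellation at block boundaries ($6k+2$ for $k$ pairs), and close with a parity-dependent finisher (a single leaf inversion to flip the center when $n$ is odd; an edge-reversal absorbing the last leaf and the center when $n$ is even). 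Your arithmetic checks out in both parities, the observation that a local inversion at a leaf flips only the center and fixes the graph is correct, and the $cc$-merging argument is legitimate since $B_{ucc v}=B_{uv}$ by Proposition~\ref{pro:1}. What the paper's version buys is a cleaner single formula and a one-line induction with no case split; what yours buys is that the pair-flip primitive is graph-preserving at every intermediate block, so the hypotheses of the invoked lemmas are transparently maintained throughout.
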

\begin{proof}
    Let $c_0$ be the center and $c_1, c_2, \ldots, c_{n-1}$ be the leaves of $S_n$. 
    
    Consider the bicolored graph $B = (S_n, \beta)$. Define
    \[
        w_n = (c_1c_0c_1c_0c_1)(c_2c_0c_2)(c_3c_0c_3)\cdots(c_{n-1}c_0c_{n-1})(c_0).
    \]
    We claim that $B_{w_n} = (S_n, -\beta)$.

    For $2 \leq i \leq n$, let $A_i = \{c_0, c_1, \ldots, c_{i-1}\}$. We prove that $B_{w_i} = B^{A_i}$.
    For $i=1$, $w_1 = c_1c_0c_1c_0c_1c_0$, and this case follows from \autoref{edge_reversal_lemma}.  
    Assume that the statement holds for some $i = n-1$. We now prove it for $i = n$.

    Observe that
    \begin{align*}
        w 
            &= (w_{n-1})(c_0c_{n-1}c_0c_{n-1}c_0c_{n-1})(c_{n-1}) \hspace{5 cm}  \text{assume}\\ 
            &\sim (c_1c_0c_1c_0c_1)(c_2c_0c_2)(c_3c_0c_3)\cdots(c_{n-2}c_0c_{n-2})(c_0)
                    (c_0c_{n-1}c_0c_{n-1}c_0c_{n-1})\\ & \hspace{17pt}(c_{n-1}) \\
            &= (c_1c_0c_1c_0c_1)(c_2c_0c_2)(c_3c_0c_3)\cdots(c_{n-2}c_0c_{n-2})(c_0c_0)
                    (c_{n-1}c_0c_{n-1})(c_0)(c_{n-1}\\ & \hspace{17pt}c_{n-1}) \\
            &\sim (c_1c_0c_1c_0c_1)(c_2c_0c_2)(c_3c_0c_3)\cdots(c_{n-2}c_0c_{n-2})(c_{n-1}c_0c_{n-1})(c_0)\\
                  & \hspace{17pt}  \hspace{1pt} \text{by \autoref{pro:1}} \\
            &= w_n.  
    \end{align*}
    Therefore, it is enough to show that $B_w = (S_n, -\beta)$. 

    By the induction hypothesis, $B_{w_{n-1}} = B^{A_{n-1}}$; that is, $B_{w_{n-1}}$ has underlying graph $S_n$, where the colors of all vertices except $c_{n-1}$ are flipped.  
    Let $w' = c_0c_{n-1}c_0c_{n-1}c_0c_{n-1}$. By \autoref{edge_reversal_lemma}, $(B_{w_{n-1}})_{w'}$ has the same underlying graph $S_n$, and the colors of $c_0$ and $c_{n-1}$ are flipped relative to $B_{w_{n-1}}$.  
    Thus, $B_{w_{n-1}w'}$ has all vertex-colors flipped except $c_0$, with respect to $B$. Finally, applying $c_{n-1}$ to $(B_{w_{n-1}w'})$ preserves the underlying graph (since $c_{n-1}$ has degree $1$) and flips the color of $c_0$. Hence, all vertex colors are flipped, completing the proof.
    \qed
\end{proof}

\begin{theorem}\label{complete}
    For $n \geq 2$, we have $cr(K_n) \leq 3n$.
\end{theorem}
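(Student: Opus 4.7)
The plan is to reduce the complete-graph case to the star case already handled in Theorem~\ref{star}. The key observation is that a single local inversion at any vertex $v \in V(K_n)$ transforms $K_n$ into the star $S_v$ centered at $v$: since $N_{K_n}(v) = V(K_n)\setminus\{v\}$ induces $K_{n-1}$, local complementation at $v$ toggles every edge among these vertices, removing them all and leaving only the edges incident to $v$. Dually, applying $v$ to $S_v$ restores $K_n$, because the independent set induced on $N(v)$ becomes $K_{n-1}$ after the toggle.

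Let $u_1,\dots,u_{n-1}$ denote the vertices of $V(K_n)\setminus\{v\}$, and let $w_n$ be the star color-reversal string from Theorem~\ref{star} applied to $S_v$ with $v$ playing the role of the center $c_0$ and $u_1,\dots,u_{n-1}$ playing the roles of the leaves $c_1,\dots,c_{n-1}$. I would use the string $v\cdot w_n\cdot v$. Tracking colors step by step: the first $v$ flips the colors of $u_1,\dots,u_{n-1}$; the string $w_n$, by Theorem~\ref{star}, flips every color of $S_v$ while preserving its underlying graph, so that after $w_n$ only the color of $v$ is flipped relative to the original bicoloration $\beta$; the final $v$ then flips the colors of $u_1,\dots,u_{n-1}$ once more, yielding a full color reversal. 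In the meantime the underlying graph has cycled $K_n\to S_v\to S_v\to K_n$, so the result is $(K_n,-\beta)=B^{V(K_n)}$.

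For the length bound, a naive count gives $|v\cdot w_n\cdot v|=1+3n+1=3n+2$, exceeding the target by two. However, from the construction of $w_n$ in Theorem~\ref{star}, the last symbol of $w_n$ is $c_0=v$, so the suffix of $v\cdot w_n\cdot v$ reads $\ldots v v$; by Proposition~\ref{pro:1} two consecutive equal letters have no net effect and can be deleted, producing an equivalent string of length $1+(3n-1)=3n$. The main step requiring care is this cancellation together with confirming that $w_n$ ends with the center; everything else is direct computation using Proposition~\ref{pro:1} and Theorem~\ref{star}. The small case $n=2$, where the bound $cr(K_2)\le 6$ is very loose, is already covered by Proposition~\ref{k2k3p3}.
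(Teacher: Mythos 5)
Your proposal is correct and is essentially the paper's own proof: conjugate the star color-reversal string $w_n$ of Theorem~\ref{star} by a local inversion at the chosen vertex, track the colors exactly as you do, and cancel the adjacent repeated letters at the end via Proposition~\ref{pro:1} to land at length $3n$.
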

\begin{proof}
    Let $V(K_n) = \{c_0, c_1, \ldots, c_{n-1}\}$, and let $B = (K_n, \beta)$
    be a bicolored graph. Define
    \[
        w = (c_0)(w')(c_0),
    \]
    where
    \[
        w' = (c_1c_0c_1c_0c_1)(c_2c_0c_2)(c_3c_0c_3)\cdots(c_{n-1}c_0c_{n-1})(c_0),
    \]
    which is the string used for the color-reversal of $S_n$ in \autoref{star}. 

    In $B_{c_0}$, the underlying graph is a star with center $c_0$, and the colors of all vertices except $c_0$ are flipped.  
    By \autoref{star}, $(B_{c_0})_{w'}$ has the same underlying graph (the star centered at $c_0$), and the colors of all vertices are flipped relative to $B_{c_0}$.  
    Therefore, in $B_{c_0w'}$, only the color of $c_0$ is flipped relative to $B$.  
    Finally, applying $c_0$ once more, $(B_{c_0w'})_{c_0}$ has underlying graph $K_n$, with all vertex colors flipped relative to $B$.  

    The claim follows since
    \begin{align*}
        w &= (c_0)(w')(c_0) \\
          &= (c_0)(c_1c_0c_1c_0c_1)(c_2c_0c_2)(c_3c_0c_3)\cdots(c_{n-1}c_0c_{n-1})(c_0)(c_0) \\
          &\sim (c_0)(c_1c_0c_1c_0c_1)(c_2c_0c_2)(c_3c_0c_3)\cdots(c_{n-1}c_0c_{n-1})
          \hspace{30 pt} \text{by \autoref{pro:1}}.
    \end{align*}
    This completes the proof.\qed
\end{proof}

\section{Concluding remarks}\label{sec:conclusion}
We conclude with a few open problems.
We have shown that any bicolored graph can be color-reversed in at most $4n-4$ local inversions when $n$ is even, and in at most $4n-3$ inversions when $n$ is odd. Although this bound is tight for some small graphs, such as $P_3$, $K_3$, and $S_4$, we do not believe it is tight for larger graphs. With the aid of a computer program, we determined $cr(G)$ for all graphs with at most $5$ vertices and found that the value never exceeds $3n$. 

\noindent
\textbf{Problem 1:} Is it true that $cr(G)\leq 3n$?

We proved that Problem 1 has an affirmative answer for stars and complete graphs. But is it tight for them?

\noindent
\textbf{Problem 2:} Is it true that $cr(G) = 3n$ for all stars and complete graphs of at least 3 vertices?

As mentioned earlier, all our proofs are constructive, and the corresponding strings can be obtained in polynomial time. Nevertheless, the complexity of computing $cr(G)$ remains unknown.

\noindent
\textbf{Problem 3:} Is the following problem solvable in polynomial time?  
Given a graph $G$ and an integer $k$, decide whether $cr(G)\leq k$.

Note that no lower bounds are currently known for $cr(G)$, which is another direction worth exploring. Some good lower bounds may help us to get a good approximation algorithm. In particular:

\noindent
\textbf{Problem 4:} Does our algorithm (implied by \autoref{thm:main}) serve as an approximation algorithm with a good approximation factor for the optimization version of the problem?

The bound we obtained for the number of local inversions required to transform one bicolored graph into another (\autoref{transform}) is weaker than the color-reversal bound. However, it is unclear whether selective color-reversal inherently requires more local inversions than global color reversal.

\noindent
\textbf{Problem 5:} Is it true that the minimum number of local inversions required to transform $B$ into $B'$ is at most $cr(G)$, where $G$ is the underlying graph of $B$ and $B'$?

A trivial brute-force algorithm for the decision version runs in $n^{O(k)}$ time, implying that the problem lies in $XP$. 

\noindent
\textbf{Problem 6:} Does the problem admit an FPT algorithm parameterized by $k$?

\bibliographystyle{splncs04}
\bibliography{kumud}

\end{document}